\newenvironment{customthm}[1]
{\innercustomthm}
{\endinnercustomthm}
\newtheorem{theorem}{Theorem}
\theoremstyle{plain}
\newtheorem{corollary}{Corollary}
\newtheorem{example}{Example}
\newtheorem{lemma}{Lemma}
\newtheorem{proposition}{Proposition}
\newtheorem{remark}{Remark}
\numberwithin{equation}{section}
\begin{document}
\title{$2-$Killing Vector Fields on Warped Product Manifolds}
\author{S. Shenawy}
\address{\newline Basic Science Department,\newline Modern Academy for Engineering and Technology,\newline Maadi, Egypt,}

\email{drssshenawy@eng.modern-academy.edu.eg }
\author{B. Unal}
\address{Department of Mathematics, Bilkent University, Bilkent, 06800
Ankara, Turkey}

\email{bulentunal@mail.com }
\subjclass[2010]{53C21, 53C25, 53C50, 53C80}
\keywords{Warped product, $2-$Killing vector field, parallel vector fields,
standard static space-times.}

\begin{abstract}
The present article provides a study of $2-$Killing vector fields on
warped product manifolds as well as characterization of this
structure on standard static and generalized Robertson-Walker
space-times. Some conditions for a $2-$Killing vector field on
a warped product manifold to be parallel are obtained. Moreover,
some results on the curvature of a warped product manifolds in
terms of $2-$Killing vector fields are derived. Finally, we apply
our results to describe $2-$Killing vector fields of some well
known warped product space-time models.
\end{abstract}

\maketitle

\section{Introduction}

Killing vector fields have a well-known geometrical and physical
interpretations and have been studied on Riemannian and pseudo-Riemannian
manifolds for a long time. The number of independent Killing vector fields
measures the degree of symmetry of a Riemannian manifold. Thus, the problems
of existence and characterization of Killing vector fields are important and
are widely discussed by both mathematicians and physicists \cite%
{Al-Solamy:2011,Apostolopoulos:2005,Berestovskii:2008,Besse2008,Ivancevic:2007,Unal:2012,Duggala:2005,Nomizo:1960,Semmelmann:2003,Yorozu:1982}

Generalization of Killing vector fields has a long history in mathematics
for different scales and purposes\cite%
{Blair:1971,Deshmukh:2012,Deshmukh:2014,Kuhnel:1997}. In \cite{Operea:2008},
the concept of $2-$Killing vector fields, as a new generalization of Killing
vector fields, was first introduced and studied on Riemannaian manifolds.
The relations between $2-$Killing vector fields, curvature and monotone vector
fields are obtained. Finally, a characterization of $2-$Killing vector field
on $\mathbb{R}^{n}$ is derived.

At this point, we want to emphasize that the concept of monotone vector fields
introduced by S. Z. N\'{e}meth (see \cite{SZ3,SZ1,SZ2,SZ4}) and since then
they have been studied as a research topic in the area of (nonlinear)
analysis on Riemannian manifolds (see also \cite{Barani} as an additional
reference to the above list). As noted above, the connections
between monotone vector fields and $2-$Killing vector fields have been
established. In addition to that, by using space-like hypersurfaces of
a Lorentzian manifold (see \cite{Neto2015}), these topics have been received
attention in Lorentzian geometry as well. Thus the notion of $2-$Killing vector
fields is important in different branches of mathematics from (nonlinear)
analysis on Riemannian manifolds to Lorentzian geometry.

As far as we know, the concept of $2-$Killing vector fields has been
studied neither on warped products nor on space-time models up to this
paper in which we intent to fill this gap in the literature by providing
a complete study of $2-$Killing vector fields on such spaces. In this way,
all the results related to $2-$Killing vector fields and thus monotone vector
fields can be easily extended to a special class of manifolds, namely, warped
product manifolds.

We organize the paper as follows. In Section 2, we state well known connection
related formulas of warped product manifolds and Killing vector fields.
Thus some of the proofs are omitted. In Section 3, as the core of the paper,
the relation between $2-$Killing vector fields on a warped product manifold
and $2-$Killing vector fields on the fiber and base manifolds is discussed.
Here, we now state our main results; the following theorem represents an
important and helpful identity.

\begin{customthm}{\ref{TH 1}}
Let $\zeta =\left( \zeta _{1}, \zeta _{2}\right) \in \mathfrak{X}\left(
M_{1}\times _{f}M_{2}\right) $ be a vector field on a warped product
manifold of the form $M_{1}\times _{f}M_{2}$. Then%
\begin{eqnarray*}
\left( \mathcal{L}_{\zeta }\mathcal{L}_{\zeta }g\right) \left( X,Y\right)
&=&\left( \mathcal{L}_{\zeta _{1}}^{1}\mathcal{L}_{\zeta
_{1}}^{1}g_{1}\right) \left( X_{1},Y_{1}\right) +f^{2}\left( \mathcal{L}%
_{\zeta _{2}}^{2}\mathcal{L}_{\zeta _{2}}^{2}g_{2}\right) \left(
X_{2},Y_{2}\right) \\
&&+4f\zeta _{1}\left( f\right) \left( \mathcal{L}_{\zeta
_{2}}^{2}g_{2}\right) \left( X_{2},Y_{2}\right) +2f\zeta _{1}\left( \zeta
_{1}\left( f\right) \right) g_{2}\left( X_{2},Y_{2}\right) \\
&&+2\zeta _{1}\left( f\right) \zeta _{1}\left( f\right) g_{2}\left(
X_{2},Y_{2}\right)
\end{eqnarray*}%
for any vector fields $X,Y\in \mathfrak{X}\left( M_{1}\times
_{f}M_{2}\right) $.
\end{customthm}

The proof of this result contains long computations that have been done
using previous results on warped product manifolds (see appendix A). As an
immediate consequence, the relation between $2-$Killing vector fields on a
warped product manifold and those on product factors is given.

Some conditions for a $2-$Killing vector field to be parallel vector field is
considered in the following theorem.

\begin{customthm}{\ref{TH2}}
Let $\zeta \in \mathfrak{X}\left( M_{1}\times _{f}M_{2}\right) $ be a vector
field on a warped product manifold of the form $M_{1}\times _{f}M_{2}$. Then

\begin{enumerate}
\item $\zeta =\zeta _{1}+\zeta _{2}$ is parallel if $\zeta _{i}$ is a $2-$%
Killing vector field, $Ric^{i}\left( \zeta _{i},\zeta _{i}\right) \leq 0,$ $%
i=1,2$ and $f$ is constant.

\item $\zeta =\zeta _{1}$ is parallel if $\zeta _{1}$ is a $2-$Killing
vector field, $Ric^{1}\left( \zeta _{1},\zeta _{1}\right) \leq 0,$ and $%
\zeta_{1}\left( f\right) =0$.

\item $\zeta =\zeta _{2}$ is parallel if $\zeta _{2}$ is a $2-$Killing
vector field, $Ric^{2}\left( \zeta _{2},\zeta _{2}\right) \leq 0,$ and $f$
is constant.
\end{enumerate}
\end{customthm}

The preceding theorem also provides some results on the curvature of a
warped product manifold in terms of $2-$Killing vector fields.

\begin{customthm}{\ref{TH3}}
Suppose that $\zeta \in \mathfrak{X}\left( M_{1}\times _{f}M_{2}\right) $
is a nontrivial $2-$Killing vector field. If $D_{\zeta }\zeta $ is parallel
along a curve $\gamma $, then%
\begin{equation*}
K\left( \zeta ,\dot{\gamma}\right) \geq 0.
\end{equation*}
\end{customthm}

Finally, in section 4, we apply these results on standard static
space-times and generalized Robertson-Walker space-times. For instance,
the following result is obtained.

\begin{customthm}{\ref{TH4}}
Let $\bar{M}=I_{f}\times M$ be a standard static space-time with the metric $%
\bar{g}=-f^{2}dt^{2}\oplus g$. Suppose that $u:I\rightarrow \mathbb{R}$ is
smooth and $\zeta $ is a vector field on $F$. Then $\bar{\zeta}=u\partial
_{t}+\zeta $ is a $2-$Killing vector field on $\bar{M}$ if one of the the
following conditions is satisfied:

\begin{enumerate}
\item $\zeta $ is $2-$Killing on $M$, $u=a$ and $f\zeta \left( f\right) =b$
where $a,b\in \mathbb{R}.$

\item $\zeta $ is $2-$Killing on $M$, $u=\left( rt+s\right) ^{\frac{1}{3}}$ and $%
\zeta \left( f\right) =0$ where $r,s\in \mathbb{R}.$
\end{enumerate}
\end{customthm}

Furthermore, the converse of this result and many others on generalized
Robertson-Walker space-times are discussed.

\section{Preliminaries}

In this section, we will provide basic definitions and curvature formulas
about warped product manifolds and Killing vector fields.

Suppose that $\left(M_{1},g_{1},D_{1}\right)$ and $\left(M_{2},g_{2},D_{2}\right)$
are two $\mathcal C^{\infty }$ pseudo-Riemannian manifolds equipped with
Riemannian metrics $g_{i}$ where $D_{i}$ is the Levi-Civita connection of
the metric $g_{i}$ for $i=1,2.$ Further suppose that $\pi _{1}:M_{1}\times
M_{2}\rightarrow M_{1}$ and $\pi _{2}:M_{1}\times M_{2}\rightarrow M_{2}$ are
the natural projection maps of the Cartesian product $M_{1}\times M_{2}$
onto $M_{1}$ and $M_{2},$ respectively. If $f:M_{1}\rightarrow \left(
0,\infty \right) $ is a positive real-valued smooth function, then the warped
product manifold $M_{1}\times _{f}M_{2}$ is the the product manifold $%
M_{1}\times M_{2}$ equipped with the metric tensor $g=g_{1}\oplus g_{2}$
defined by%
\begin{equation*}
g=\pi _{1}^{\ast }\left( g_{1}\right) \oplus \left( f\circ \pi _{1}\right)
^{2}\pi _{2}^{\ast }\left( g_{2}\right)
\end{equation*}%
where $^{\ast }$ denotes the pull-back operator on tensors\cite%
{Bishop:1969,Oneill:1983}. The function $f$ is called the warping function
of the warped product manifold $M_{1}\times _{f}M_{2}$. In particular, if $%
f=1$, then $M_{1}\times _{1}M_{2}=M_{1}\times M_{2}$ is the usual Cartesian
product manifold. It is clear that the submanifold $M_{1}\times \{q\}$ is
isometric to $M_{1}$ for every $q\in M_{2}$. Moreover, $\{p\}\times M_{2}$ is
homothetic to $M_{2}$. Throughout this article we use the same notation for
a vector field and for its lift to the product manifold.

Let $D$ be the Levi-Civita connection of the metric tensor $g$. The
following proposition is well-known\cite{Bishop:1969}.

\begin{proposition}
\label{P1}Let $\left( M_{1}\times _{f}M_{2},g\right) $ be a Riemannian
warped product manifold with warping function $f>0$ on $M_{1}$. Then

\begin{enumerate}
\item $\displaystyle{D_{X_{1}}Y=D_{X_{1}}^{1}Y_{1}\in \mathfrak{X}\left( M_{1}\right) }$

\item $\displaystyle{D_{X_{1}}Y_{2}=D_{Y_{2}}X_{1}=\frac{X_{1}\left( f\right) }{f}Y_{2}}$

\item $\displaystyle{D_{X_{2}}Y_{2}=-fg_{2}\left( X_{2},Y_{2}\right) \nabla^1
f+D_{X_{2}}^{2}Y_{2}}$
\end{enumerate}

for all $X_{i},Y_{i}\in \mathfrak{X}\left( M_{i}\right) $, with $i=1,2$ where $%
\nabla^1 f$ is the gradient of $f$.
\end{proposition}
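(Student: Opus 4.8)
The plan is to read off all three formulas from the Koszul identity
\[
2g(D_X Y,Z)=Xg(Y,Z)+Yg(X,Z)-Zg(X,Y)+g([X,Y],Z)-g([X,Z],Y)-g([Y,Z],X),
\]
applied to lifts of vector fields from the two factors and tested separately against a horizontal lift $Z_{1}\in\mathfrak{X}(M_{1})$ and a vertical lift $Z_{2}\in\mathfrak{X}(M_{2})$. Everything rests on three elementary properties of such lifts: first, the metric decomposes as $g(X_{1},Y_{1})=g_{1}(X_{1},Y_{1})$, $g(X_{1},Y_{2})=0$, and $g(X_{2},Y_{2})=f^{2}g_{2}(X_{2},Y_{2})$; second, horizontal and vertical lifts commute, $[X_{1},Y_{2}]=0$, while $[X_{1},Y_{1}]$ remains horizontal and $[X_{2},Y_{2}]$ remains vertical; and third, $f$ and $X_{1}(f)$ are functions on $M_{1}$, whereas $g_{2}(X_{2},Y_{2})$ (for lifts) is a function on $M_{2}$, so that $X_{2}(f)=0$ and $X_{1}(g_{2}(X_{2},Y_{2}))=0$.

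For part (1), I would pair $D_{X_{1}}Y_{1}$ first with $Z_{1}$: each bracket that occurs is horizontal and each metric coefficient is an $M_{1}$-function, so the identity collapses term by term to the Koszul formula of $(M_{1},g_{1})$, giving $g(D_{X_{1}}Y_{1},Z_{1})=g_{1}(D^{1}_{X_{1}}Y_{1},Z_{1})$. Pairing instead with $Z_{2}$, all six terms vanish, because the mixed metrics are zero, $Z_{2}$ annihilates the $M_{1}$-function $g_{1}(X_{1},Y_{1})$, and the relevant brackets satisfy $[X_{1},Z_{2}]=[Y_{1},Z_{2}]=0$ with $[X_{1},Y_{1}]$ orthogonal to $Z_{2}$. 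Hence $D_{X_{1}}Y_{1}$ is horizontal and equals $D^{1}_{X_{1}}Y_{1}$. For part (2), torsion-freeness combined with $[X_{1},Y_{2}]=0$ gives $D_{X_{1}}Y_{2}=D_{Y_{2}}X_{1}$ at once; testing against $Z_{1}$ kills the horizontal component exactly as before, and testing against $Z_{2}$ leaves only $X_{1}g(Y_{2},Z_{2})=X_{1}(f^{2})g_{2}(Y_{2},Z_{2})=2fX_{1}(f)g_{2}(Y_{2},Z_{2})$, so that $2g(D_{X_{1}}Y_{2},Z_{2})=2\frac{X_{1}(f)}{f}g(Y_{2},Z_{2})$ and therefore $D_{X_{1}}Y_{2}=\frac{X_{1}(f)}{f}Y_{2}$.

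Part (3) carries the only genuine bookkeeping and is where I expect the main obstacle. Pairing $D_{X_{2}}Y_{2}$ with $Z_{2}$, the common factor $f^{2}$ pulls out of every term (this uses $X_{2}(f)=0$), and the surviving expression is precisely $f^{2}$ times the $M_{2}$-Koszul formula, reconstructing $f^{2}\cdot 2g_{2}(D^{2}_{X_{2}}Y_{2},Z_{2})=2g(D^{2}_{X_{2}}Y_{2},Z_{2})$ and hence the vertical part $D^{2}_{X_{2}}Y_{2}$. Pairing with $Z_{1}$, only $-Z_{1}g(X_{2},Y_{2})=-2fZ_{1}(f)g_{2}(X_{2},Y_{2})$ survives, giving $g(D_{X_{2}}Y_{2},Z_{1})=-fg_{2}(X_{2},Y_{2})Z_{1}(f)$. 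The delicate final step is to recognize the gradient: since $\nabla f$ is horizontal, $g(\nabla f,Z_{1})=g_{1}(\nabla f,Z_{1})=Z_{1}(f)$, so the last expression equals $g(-fg_{2}(X_{2},Y_{2})\nabla f,Z_{1})$, while the horizontal vector $-fg_{2}(X_{2},Y_{2})\nabla f$ pairs to zero against every $Z_{2}$. Adding the horizontal and vertical components then yields $D_{X_{2}}Y_{2}=-fg_{2}(X_{2},Y_{2})\nabla f+D^{2}_{X_{2}}Y_{2}$, which completes the verification of all three identities.
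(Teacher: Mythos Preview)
Your Koszul-formula derivation is correct and is the standard route to these identities. The paper itself gives no proof of this proposition: it is stated as well-known with a citation to Bishop--O'Neill, so there is no in-paper argument to compare against, and your verification is exactly the kind the cited source provides.
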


A vector field $\zeta \in \mathfrak{X}\left( M\right) $ on a pseudo-Riemannian
manifold $\left( M,g\right) $ with metric $g$ is called a Killing vector field if
\begin{equation*}
\mathcal{L}_{\zeta }g=0
\end{equation*}%
where $\mathcal{L}_{\zeta }$ is the Lie derivative on $M$ with respect to $%
\zeta $. One can redefine Killing vector fields using the following
identity. Let $\zeta $ be a vector field, then%
\begin{equation}
\left( \mathcal{L}_{\zeta }g\right) \left( X,Y\right) =g\left( D_{X}\zeta
,Y\right) +g\left( X,D_{Y}\zeta \right)
\end{equation}%
for any vector fields $X,Y\in \mathfrak{X}\left( M\right) $. A simple
yet useful characterization of Killing vector fields is given in the following
proposition. The proof is straightforward by using the symmetry in the above identity.

\begin{lemma}
If $\left( M,g,D\right) $ is a pseudo-Riemannian manifold with Riemannian
connection $D$. A vector field $\zeta \in \mathfrak{X}\left( M\right) $ is a
Killing vector field if and only if%
\begin{equation}
g\left( D_{X}\zeta ,X\right) =0
\end{equation}%
for any vector field $X\in \mathfrak{X}\left( M\right) $.
\end{lemma}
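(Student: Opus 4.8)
The plan is to derive both implications directly from the Lie-derivative identity
\[
(\mathcal{L}_\zeta g)(X,Y) = g(D_X\zeta, Y) + g(X, D_Y\zeta),
\]
stated just above, exploiting its symmetry in $X$ and $Y$. No curvature or connection computations beyond this identity are needed; the entire argument is a polarization, so the two directions are essentially one computation read in opposite orders.

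For the forward implication I would assume $\zeta$ is Killing, so $\mathcal{L}_\zeta g = 0$. Substituting $Y = X$ into the identity collapses the two terms on the right into one, giving $0 = (\mathcal{L}_\zeta g)(X,X) = 2\,g(D_X\zeta, X)$, whence $g(D_X\zeta, X) = 0$ for every $X \in \mathfrak{X}(M)$.

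For the converse I would assume $g(D_X\zeta, X) = 0$ for all $X$ and recover $\mathcal{L}_\zeta g = 0$ by polarization: replacing $X$ with $X+Y$ and expanding, using the additivity of $D$ in its subscript argument and the $\mathbb{R}$-bilinearity and symmetry of $g$,
\[
0 = g(D_{X+Y}\zeta, X+Y) = g(D_X\zeta, X) + g(D_X\zeta, Y) + g(D_Y\zeta, X) + g(D_Y\zeta, Y).
\]
The first and last terms vanish by hypothesis, leaving $g(D_X\zeta, Y) + g(D_Y\zeta, X) = 0$, which is precisely $(\mathcal{L}_\zeta g)(X,Y) = 0$ by the identity. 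Since $X, Y$ were arbitrary, $\mathcal{L}_\zeta g = 0$ and $\zeta$ is Killing.

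There is essentially no serious obstacle; the only point meriting a word of care is the legitimacy of the polarization step, namely that $X \mapsto g(D_X\zeta, X)$ is a genuine quadratic expression whose vanishing forces the vanishing of the associated symmetric form $(X,Y) \mapsto g(D_X\zeta,Y) + g(D_Y\zeta,X)$. This is guaranteed by the tensoriality of $D_X\zeta$ in $X$ together with the bilinearity of $g$, so the substitution $X \to X+Y$ is valid pointwise and the conclusion propagates to all of $M$.
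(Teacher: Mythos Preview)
Your proof is correct and is precisely the argument the paper has in mind: the paper omits the details entirely, stating only that ``the proof is direct using the symmetry in the above identity,'' and your polarization argument is exactly the standard way to unpack that remark. There is nothing to add.
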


Now we consider Killing vector fields on Riemannian warped product
manifolds. The following simple result will help us to present a
characterization of Killing vector fields on warped product manifolds.

\begin{lemma}
Let $\zeta \in \mathfrak{X}\left( M_{1}\times _{f}M_{2}\right) $ be a vector
field on the pseudo-Riemannian warped product manifold $M_{1}\times _{f}M_{2}$ with
warping function $f$. Then for any vector field $X\in \mathfrak{X}\left(
M_{1}\times _{f}M_{2}\right) $ we have%
\begin{equation}
g\left( D_{X}\zeta ,X\right) =g_{1}\left( D_{X_{1}}^{1}\zeta
_{1},X_{1}\right) +f^{2}g_{2}\left( D_{X_{2}}^{2}\zeta _{2},X_{2}\right)
+f\zeta _{1}\left( f\right) \left\Vert X_{2}\right\Vert^{2}_2
\end{equation}
\end{lemma}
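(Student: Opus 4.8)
The plan is to compute $g(D_X\zeta,X)$ directly by decomposing $X = X_1 + X_2$ and $\zeta = \zeta_1 + \zeta_2$ and expanding $D_X\zeta$ via bilinearity of the connection into the four terms $D_{X_1}\zeta_1$, $D_{X_1}\zeta_2$, $D_{X_2}\zeta_1$, and $D_{X_2}\zeta_2$, each of which is evaluated using Proposition~\ref{P1}. First I would record, from parts (1)--(3) of that proposition, that $D_{X_1}\zeta_1 = D_{X_1}^1\zeta_1 \in \mathfrak{X}(M_1)$, that $D_{X_1}\zeta_2 = D_{\zeta_2}X_1 = \tfrac{X_1(f)}{f}\zeta_2$ and likewise $D_{X_2}\zeta_1 = \tfrac{\zeta_1(f)}{f}X_2$ (both lying in $\mathfrak{X}(M_2)$), and that $D_{X_2}\zeta_2 = -f\,g_2(X_2,\zeta_2)\nabla f + D_{X_2}^2\zeta_2$, where $\nabla f \in \mathfrak{X}(M_1)$.

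Next I would substitute these into $g(D_X\zeta,X) = g(D_{X_1}\zeta_1 + D_{X_1}\zeta_2 + D_{X_2}\zeta_1 + D_{X_2}\zeta_2,\ X_1 + X_2)$ and use the block structure of the warped metric: $g$ restricted to $M_1$-directions is $g_1$, $g$ restricted to $M_2$-directions is $f^2 g_2$, and the two blocks are $g$-orthogonal. Consequently the only surviving cross terms pair an $M_1$-vector with an $M_1$-vector or an $M_2$-vector with an $M_2$-vector. The $M_1$-block contributions are $g_1(D_{X_1}^1\zeta_1,X_1)$ together with the term from the $\nabla f$ piece, namely $g(-f\,g_2(X_2,\zeta_2)\nabla f,\ X_1) = -f\,g_2(X_2,\zeta_2)\,X_1(f)$. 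The $M_2$-block contributions are $g\big(\tfrac{X_1(f)}{f}\zeta_2 + \tfrac{\zeta_1(f)}{f}X_2 + D_{X_2}^2\zeta_2,\ X_2\big) = f^2 g_2\big(\tfrac{X_1(f)}{f}\zeta_2 + \tfrac{\zeta_1(f)}{f}X_2 + D_{X_2}^2\zeta_2,\ X_2\big)$, which expands to $f\,X_1(f)\,g_2(\zeta_2,X_2) + f\,\zeta_1(f)\,\|X_2\|^2 + f^2 g_2(D_{X_2}^2\zeta_2,X_2)$, where $\|X_2\|^2 = g_2(X_2,X_2)$.

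Finally I would add the blocks: the term $-f\,g_2(X_2,\zeta_2)\,X_1(f)$ from the $M_1$-block cancels exactly against $+f\,X_1(f)\,g_2(\zeta_2,X_2)$ from the $M_2$-block, leaving
\[
g(D_X\zeta,X) = g_1(D_{X_1}^1\zeta_1,X_1) + f^2 g_2(D_{X_2}^2\zeta_2,X_2) + f\,\zeta_1(f)\,\|X_2\|^2,
\]
which is the claimed identity. I do not anticipate a genuine obstacle here; the only point requiring care is the bookkeeping of the warping factors $f$ (the metric contributes an $f^2$ in the $M_2$-block while Proposition~\ref{P1}(2) contributes a $1/f$, so the net effect in the cross-cancellation is a single power of $f$), and verifying that the cross term produced by the gradient component of $D_{X_2}^2\zeta_2$ and the connection term $\tfrac{X_1(f)}{f}\zeta_2$ indeed cancel rather than reinforce. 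A secondary, purely notational subtlety is that $\zeta_1(f)$ and $X_1(f)$ are well defined since $f$ is a function on $M_1$ and these are the $M_1$-components; as scalar functions they descend to the product in the usual lift convention adopted in the Preliminaries.
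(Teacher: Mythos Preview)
Your proof is correct and follows essentially the same approach as the paper: both decompose $D_X\zeta$ via Proposition~\ref{P1}, use the block-orthogonal structure of the warped metric to split $g(D_X\zeta,X)$ into $M_1$- and $M_2$-parts, and observe the cancellation of the $\pm f\,X_1(f)\,g_2(X_2,\zeta_2)$ terms. One minor slip in your closing remarks: the gradient term $-f\,g_2(X_2,\zeta_2)\nabla f$ comes from $D_{X_2}\zeta_2$ (the warped connection), not from $D_{X_2}^2\zeta_2$ as you wrote, but your actual computation above handled this correctly.
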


\begin{proof}
Using Proposition \ref{P1}, we get%
\begin{eqnarray*}
g\left( D_{X}\zeta ,X\right) &=&g_{1}\left( D_{X_{1}}^{1}\zeta
_{1}-fg_{2}\left( X_{2},\zeta _{2}\right) \nabla f,X_{1}\right)
+f^{2}g_{2}\left( D_{X_{2}}^{2}\zeta _{2}+\zeta _{1}\left( \ln f\right)
X_{2}\right. \\
&&\left. +X_{1}\left( \ln f\right) \zeta _{2},X_{2}\right) \\
&=&g_{1}\left( D_{X_{1}}^{1}\zeta _{1},X_{1}\right) -fg_{2}\left(
X_{2},\zeta _{2}\right) X_{1}\left( f\right) +f^{2}g_{2}\left(
D_{X_{2}}^{2}\zeta _{2},X_{2}\right) \\
&&+f\zeta _{1}\left( f\right) g_{2}\left( X_{2},X_{2}\right) +fX_{1}\left(
f\right) g_{2}\left( \zeta _{2},X_{2}\right) \\
&=&g_{1}\left( D_{X_{1}}^{1}\zeta _{1},X_{1}\right) +f^{2}g_{2}\left(
D_{X_{2}}^{2}\zeta _{2},X_{2}\right) +f\zeta _{1}\left( f\right) \left\Vert
X_{2}\right\Vert^{2}_2
\end{eqnarray*}
\end{proof}

The preceding two theorems give us a characterization of Killing vector fields on
warped product manifolds. They are immediate consequence of the previous
result.

\begin{theorem}
Let $\zeta =\left( \zeta _{1},\zeta _{2}\right) \in \mathfrak{X}\left(
M_{1}\times _{f}M_{2}\right) $ be a vector field on the pseudo-Riemannian warped
product manifold $M_{1}\times _{f}M_{2}$ with warping function $f$. Then $%
\zeta $ is a Killing vector field if one of the following conditions hold

\begin{enumerate}
\item $\zeta =\left( \zeta _{1},0\right) $ and $\zeta _{1}$ is a killing
vector field on $M_{1}$.

\item $\zeta =\left( 0,\zeta _{2}\right) $ and $\zeta _{2}$ is a killing
vector field on $M_{2}$.

\item $\zeta _{i}$ is a Killing vector field on $M_{i},$ for $i=1,2$ and $\zeta
_{1}\left( f\right) =0$.
\end{enumerate}
\end{theorem}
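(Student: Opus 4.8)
The plan is to reduce everything to the two results just established: the infinitesimal characterization that $\zeta$ is Killing if and only if $g\left( D_{X}\zeta ,X\right) =0$ for every $X\in \mathfrak{X}\left( M_{1}\times _{f}M_{2}\right) $, together with the splitting identity
$$g\left( D_{X}\zeta ,X\right) =g_{1}\left( D_{X_{1}}^{1}\zeta _{1},X_{1}\right) +f^{2}g_{2}\left( D_{X_{2}}^{2}\zeta _{2},X_{2}\right) +f\zeta _{1}\left( f\right) \left\Vert X_{2}\right\Vert ^{2}.$$
Thus the whole theorem amounts to checking, in each of the three cases, that the right-hand side of this identity vanishes for an arbitrary $X=\left( X_{1},X_{2}\right) $, after which the first characterization delivers the conclusion.

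First I would record how the three summands respond to the hypotheses. The first summand $g_{1}\left( D_{X_{1}}^{1}\zeta _{1},X_{1}\right) $ vanishes exactly when $\zeta _{1}$ is Killing on $\left( M_{1},g_{1}\right) $, by the same infinitesimal characterization applied on the factor $M_{1}$; it vanishes trivially when $\zeta _{1}=0$. Likewise the second summand $f^{2}g_{2}\left( D_{X_{2}}^{2}\zeta _{2},X_{2}\right) $ vanishes when $\zeta _{2}$ is Killing on $\left( M_{2},g_{2}\right) $, and trivially when $\zeta _{2}=0$. The only genuinely warped contribution is the third summand $f\zeta _{1}\left( f\right) \left\Vert X_{2}\right\Vert ^{2}$, whose vanishing is governed solely by $\zeta _{1}\left( f\right) $, since $f>0$.

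With this in hand the cases are nearly immediate. In case (2), $\zeta _{1}=0$ annihilates both the first summand and the third (the latter because $\zeta _{1}\left( f\right) =0$ whenever $\zeta _{1}=0$), while the second vanishes because $\zeta _{2}$ is Killing on $M_{2}$. In case (3), the first two summands vanish because each $\zeta _{i}$ is Killing on its own factor, and the third vanishes by the explicit hypothesis $\zeta _{1}\left( f\right) =0$; hence $g\left( D_{X}\zeta ,X\right) =0$ for all $X$ and $\zeta $ is Killing.

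The point to watch, and the place where case (1) is delicate, is precisely this warped summand. When $\zeta =\left( \zeta _{1},0\right) $ with $\zeta _{1}$ Killing on $M_{1}$, the first two summands do vanish, but the choice $X=\left( 0,X_{2}\right) $ leaves $g\left( D_{X}\zeta ,X\right) =f\zeta _{1}\left( f\right) \left\Vert X_{2}\right\Vert ^{2}$, which is nonzero in general (whenever $\dim M_{2}>0$) unless $\zeta _{1}\left( f\right) =0$. So the proof of case (1) forces the extra requirement $\zeta _{1}\left( f\right) =0$, at which point case (1) is simply the $\zeta _{2}=0$ instance of case (3). I would therefore carry out case (1) under this additional hypothesis (equivalently, note that it is subsumed by case (3)), and present cases (2) and (3) as the clean direct substitutions described above. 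The only real obstacle, then, is not computational but conceptual: recognizing that lifting a Killing field from the base $M_{1}$ preserves the Killing property on the warped product exactly when that field leaves the warping function $f$ invariant.
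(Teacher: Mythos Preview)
Your approach is exactly the one the paper intends: it declares the theorem an ``immediate consequence of the previous result,'' meaning precisely the characterization $g(D_X\zeta,X)=0$ together with the splitting identity of Lemma~2. You have carried this out correctly for cases~(2) and~(3).

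Your diagnosis of case~(1) is also correct, and it exposes a genuine defect in the paper's statement rather than in your argument. Taking $\zeta=(\zeta_1,0)$ and $X=(0,X_2)$ in the splitting identity leaves exactly $f\,\zeta_1(f)\,\lVert X_2\rVert^2$, which need not vanish; a concrete counterexample is $M_1=M_2=\mathbb{R}$, $f(x)=e^x$, $\zeta_1=\partial_x$, where $\partial_x$ is Killing for $dx^2$ but not for $dx^2+e^{2x}dy^2$. So case~(1) is only true under the extra hypothesis $\zeta_1(f)=0$, at which point it is indeed subsumed by case~(3). The paper's own ``immediate consequence'' remark simply overlooks this; there is nothing further you can do to rescue case~(1) as written.
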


The converse of the above result is considered in the following result:

\begin{theorem}
Let $\zeta =\left( \zeta _{1},\zeta _{2}\right) \in \mathfrak{X}\left(
M_{1}\times _{f}M_{2}\right) $ be a killing vector field on the warped
product manifold $M_{1}\times _{f}M_{2}$ with warping function $f$. Then

\begin{enumerate}
\item $\zeta _{1}$ is a Killing vector field on $M_{1}$.

\item $\zeta _{2}$ is a Killing vector field on $M_{2}$ if $\zeta _{1}\left(
f\right) =0$.
\end{enumerate}
\end{theorem}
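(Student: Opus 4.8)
The plan is to deduce everything from the two preceding lemmas: the characterization that a vector field is Killing precisely when $g\left( D_{X}\zeta ,X\right) =0$ for every $X$, together with the splitting identity
\[
g\left( D_{X}\zeta ,X\right) =g_{1}\left( D_{X_{1}}^{1}\zeta _{1},X_{1}\right) +f^{2}g_{2}\left( D_{X_{2}}^{2}\zeta _{2},X_{2}\right) +f\zeta _{1}\left( f\right) \left\Vert X_{2}\right\Vert ^{2}.
\]
Since $\zeta $ is Killing on $M_{1}\times _{f}M_{2}$, the left-hand side vanishes for every $X=X_{1}+X_{2}$, so the right-hand side is identically zero. The whole proof then consists of feeding this identity suitably chosen test fields.

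For part (1), I would restrict to purely horizontal test fields, i.e. take $X=X_{1}$ (equivalently $X_{2}=0$). The last two terms of the identity carry a factor $X_{2}$ and drop out, leaving $g_{1}\left( D_{X_{1}}^{1}\zeta _{1},X_{1}\right) =0$ for every $X_{1}\in \mathfrak{X}\left( M_{1}\right) $. Because every vector field on $M_{1}$ arises as such a lift, the characterization lemma applied to $\left( M_{1},g_{1},D_{1}\right) $ immediately gives that $\zeta _{1}$ is Killing on $M_{1}$.

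For part (2), assuming $\zeta _{1}\left( f\right) =0$, I would instead use purely vertical test fields $X=X_{2}$ (so $X_{1}=0$). The first term vanishes, and the hypothesis $\zeta _{1}\left( f\right) =0$ kills the warping term $f\zeta _{1}\left( f\right) \left\Vert X_{2}\right\Vert ^{2}$, leaving $f^{2}g_{2}\left( D_{X_{2}}^{2}\zeta _{2},X_{2}\right) =0$. Since $f>0$ we may divide by $f^{2}$ to obtain $g_{2}\left( D_{X_{2}}^{2}\zeta _{2},X_{2}\right) =0$ for all $X_{2}\in \mathfrak{X}\left( M_{2}\right) $, and the characterization lemma on $\left( M_{2},g_{2},D_{2}\right) $ yields that $\zeta _{2}$ is Killing.

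There is no genuine obstacle here; the result is an immediate corollary of the splitting identity. The only point requiring a word of care is that the identity must be applied to arbitrary \emph{lifts} of vector fields from each factor, so one should note that the lift map $\mathfrak{X}\left( M_{i}\right) \rightarrow \mathfrak{X}\left( M_{1}\times _{f}M_{2}\right) $ is surjective onto the fields tangent to $M_{i}$. This ensures that the vanishing of $g_{i}\left( D_{X_{i}}^{i}\zeta _{i},X_{i}\right) $ for all lifted $X_{i}$ is genuinely the hypothesis of the characterization lemma on $M_{i}$, and not merely a weaker condition.
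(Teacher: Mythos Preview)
Your proof is correct and follows exactly the route the paper indicates: the paper omits a detailed proof but states that this theorem (together with its companion giving sufficient conditions) is an ``immediate consequence'' of the two lemmas---the characterization $g(D_X\zeta,X)=0$ and the splitting identity---which is precisely what you invoke. Your choice of purely horizontal and purely vertical test fields is the natural way to extract the two conclusions from the identity.
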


In \cite{Unal:2012}, the authors proved similar results on standard static
space-times using the following proposition.

\begin{proposition}
Let $\zeta =\left( \zeta _{1},\zeta _{2}\right) \in \mathfrak{X}\left(
M_{1}\times _{f}M_{2}\right) $ be a vector field on the warped product
manifold $M_{1}\times _{f}M_{2}$ with warping function $f$. Then%
\begin{equation}
\left( \mathcal{L}_{\zeta }g\right) \left( X,Y\right) =\left( \mathcal{L}%
_{\zeta _{1}}^{1}g_{1}\right) \left( X_{1},Y_{1}\right) +f^{2}\left(
\mathcal{L}_{\zeta _{2}}^{2}g_{2}\right) \left( X_{2},Y_{2}\right) +2f\zeta
_{1}\left( f\right) g_{2}\left( X_{2},Y_{2}\right)
\end{equation}%
where $\mathcal{L}_{\zeta _{i}}^{i}$ is the Lie derivative on $M_{i}$ with
respect to $\zeta _{i},$ for $i=1,2$.
\end{proposition}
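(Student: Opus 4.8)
The plan is to reduce the full bilinear identity to its diagonal restriction $Y=X$, which the preceding lemma has essentially already computed, and then recover the general case by polarization. The starting point is the standard identity
$$(\mathcal{L}_{\zeta}g)(X,Y)=g(D_{X}\zeta,Y)+g(X,D_{Y}\zeta),$$
valid on any (pseudo-)Riemannian manifold, together with its analogue $(\mathcal{L}^{i}_{\zeta_{i}}g_{i})(X_{i},Y_{i})=g_{i}(D^{i}_{X_{i}}\zeta_{i},Y_{i})+g_{i}(X_{i},D^{i}_{Y_{i}}\zeta_{i})$ on each factor $M_{i}$. Both $\mathcal{L}_{\zeta}g$ and the expression on the right-hand side of the proposition are symmetric $(0,2)$-tensors: the former because the Lie derivative of the symmetric tensor $g$ is again symmetric, and the latter because each summand is a symmetric factor tensor precomposed with the $C^{\infty}$-linear horizontal/vertical projections $X\mapsto X_{1}$, $X\mapsto X_{2}$. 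Since two symmetric $C^{\infty}$-bilinear forms that agree on the diagonal agree everywhere, it suffices to establish the identity when $Y=X$.

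Next I would set $Y=X$. The identity above collapses to $(\mathcal{L}_{\zeta}g)(X,X)=2g(D_{X}\zeta,X)$, and the previous lemma supplies $g(D_{X}\zeta,X)=g_{1}(D^{1}_{X_{1}}\zeta_{1},X_{1})+f^{2}g_{2}(D^{2}_{X_{2}}\zeta_{2},X_{2})+f\zeta_{1}(f)\|X_{2}\|^{2}$, where $\|X_{2}\|^{2}=g_{2}(X_{2},X_{2})$ as in that lemma. Doubling this and recognising $2g_{i}(D^{i}_{X_{i}}\zeta_{i},X_{i})=(\mathcal{L}^{i}_{\zeta_{i}}g_{i})(X_{i},X_{i})$ on each factor yields precisely the diagonal form of the claimed identity. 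Polarising the two symmetric bilinear forms then delivers the stated formula for all $X,Y$.

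As an independent check I would also expand $g(D_{X}\zeta,Y)+g(X,D_{Y}\zeta)$ directly through Proposition \ref{P1}, splitting $X$, $Y$, and $\zeta$ into horizontal and vertical parts and using that the two distributions are $g$-orthogonal, that $\nabla f$ is horizontal, and that $g$ rescales the vertical metric by $f^{2}$. The only delicate point in either route is the same, and it is where the structure of the result really lives: in the direct expansion the mixed terms $fX_{1}(f)g_{2}(\zeta_{2},Y_{2})$ and $-fX_{1}(f)g_{2}(Y_{2},\zeta_{2})$, together with their $X\leftrightarrow Y$ counterparts, must cancel in pairs, leaving only the three advertised terms; in the polarization route this cancellation is packaged into the symmetry that lets the diagonal value determine the entire tensor. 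I expect no genuine obstacle beyond this bookkeeping, so the result follows once the cross-term cancellation is verified.
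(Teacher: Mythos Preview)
The paper does not actually supply a proof of this proposition; it is stated without argument and attributed to Dobarro--\"Unal~\cite{Unal:2012}. Your approach is correct and self-contained: reducing to the diagonal via symmetry, invoking the preceding lemma for $g(D_{X}\zeta,X)$, and polarizing is a clean way to obtain the general bilinear identity. The direct expansion you sketch as a check is essentially the same computation carried out in the proof of that lemma, extended from $Y=X$ to arbitrary $Y$; the cross-term cancellation you flag (the $fX_{1}(f)g_{2}(\zeta_{2},Y_{2})$ terms and their $X\leftrightarrow Y$ partners) is exactly what makes the off-diagonal case go through, and your identification of it is accurate. Either route is valid; the polarization route has the virtue of reusing the lemma verbatim rather than redoing its calculation.
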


\section{$2-$Killing vector fields}

In this section after we define and state fundamental results about
$2-$Killing vector fields, we obtain the main results of the paper.

A vector field $\zeta \in \mathfrak{X}\left( M\right) $ is called a $2-$%
Killing vector field on a pseudo-Riemannian manifold $\left( M,g\right) $ if%
\begin{equation}
\mathcal{L}_{\zeta }\mathcal{L}_{\zeta }g=0
\end{equation}%
where $\mathcal{L}_{\zeta }$ is the Lie derivative in the
direction of $\zeta$ on $M$\cite{Operea:2008}.

The following two results\cite{Operea:2008} are needed to exploit the above
definition.

\begin{proposition}
\label{P2}Let $\zeta \in \mathfrak{X}\left( M\right) $ be a vector field on
a pseudo-Riemannian manifold $M$. Then%
\begin{equation}
\left( \mathcal{L}_{\zeta }\mathcal{L}_{\zeta }g\right) \left( X,Y\right)
=g\left( D_{\zeta }D_{X}\zeta -D_{\left[ \zeta ,X\right] }\zeta ,Y\right)
+g\left( X,D_{\zeta }D_{Y}\zeta -D_{\left[ \zeta ,Y\right] }\zeta \right)
+2g\left( D_{X}\zeta ,D_{Y}\zeta \right)  \label{e4}
\end{equation}%
for any vector fields $X,Y\in \mathfrak{X}\left( M\right) $.
\end{proposition}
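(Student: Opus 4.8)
The plan is to iterate the first-order Lie-derivative identity $\left(\mathcal{L}_\zeta g\right)(X,Y)=g\left(D_X\zeta,Y\right)+g\left(X,D_Y\zeta\right)$ recorded above, using that the Levi-Civita connection $D$ is metric and torsion-free. Write $h:=\mathcal{L}_\zeta g$, a symmetric $(0,2)$-tensor field. By the Leibniz rule for the Lie derivative acting on a $(0,2)$-tensor,
\[
\left(\mathcal{L}_\zeta h\right)(X,Y)=\zeta\bigl(h(X,Y)\bigr)-h\bigl([\zeta,X],Y\bigr)-h\bigl(X,[\zeta,Y]\bigr),
\]
so the entire computation amounts to substituting $h(X,Y)=g\left(D_X\zeta,Y\right)+g\left(X,D_Y\zeta\right)$ into the three terms on the right and simplifying.

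First I would expand $\zeta\bigl(h(X,Y)\bigr)=\zeta\, g\left(D_X\zeta,Y\right)+\zeta\, g\left(X,D_Y\zeta\right)$ by metric compatibility, $\zeta\, g(A,B)=g\left(D_\zeta A,B\right)+g\left(A,D_\zeta B\right)$; this yields four terms, two carrying the iterated covariant derivatives $D_\zeta D_X\zeta$ and $D_\zeta D_Y\zeta$, and two carrying $D_\zeta X$ and $D_\zeta Y$. Next I would expand $h\bigl([\zeta,X],Y\bigr)$ and $h\bigl(X,[\zeta,Y]\bigr)$ straight from the definition of $h$, producing the terms $g\left(D_{[\zeta,X]}\zeta,Y\right)$, $g\left([\zeta,X],D_Y\zeta\right)$, $g\left(D_X\zeta,[\zeta,Y]\right)$ and $g\left(X,D_{[\zeta,Y]}\zeta\right)$.

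Then I would collect. The pair $g\left(D_\zeta D_X\zeta,Y\right)-g\left(D_{[\zeta,X]}\zeta,Y\right)$ combines to $g\bigl(D_\zeta D_X\zeta-D_{[\zeta,X]}\zeta,Y\bigr)$, and symmetrically in the second slot, giving the first two summands of \eqref{e4}. The leftover four terms group as $\bigl(g\left(D_\zeta X,D_Y\zeta\right)-g\left([\zeta,X],D_Y\zeta\right)\bigr)+\bigl(g\left(D_X\zeta,D_\zeta Y\right)-g\left(D_X\zeta,[\zeta,Y]\right)\bigr)$; here I would invoke torsion-freeness, $[\zeta,X]=D_\zeta X-D_X\zeta$, so that $D_\zeta X-[\zeta,X]=D_X\zeta$ (and likewise for $Y$), and each bracket collapses to $g\left(D_X\zeta,D_Y\zeta\right)$, for a total of $2g\left(D_X\zeta,D_Y\zeta\right)$. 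Assembling the three contributions yields exactly \eqref{e4}.

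The argument is essentially bookkeeping, so I do not expect a genuine obstacle; the one place to stay alert is applying the identity $D_\zeta X-[\zeta,X]=D_X\zeta$ in precisely the two cross terms where it is needed, and confirming that no term involving $D_\zeta X$ or $D_\zeta Y$ survives in the end—as it must not, since $\mathcal{L}_\zeta\mathcal{L}_\zeta g$ is tensorial in $X$ and $Y$. A convenient running check is that the right-hand side is visibly symmetric in $X$ and $Y$ (because $g$ is), which matches the left-hand side.
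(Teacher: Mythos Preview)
Your argument is correct and complete: iterating the first-order identity via the Leibniz rule for $\mathcal{L}_\zeta$ on a $(0,2)$-tensor, then using metric compatibility and torsion-freeness exactly as you describe, yields \eqref{e4} with no missing steps. Note that the paper itself does not supply a proof of this proposition but merely cites it from \cite{Operea:2008}, so there is no in-paper argument to compare against; your write-up would in fact fill that gap.
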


The following result is quite direct and helpful.

\begin{corollary}
\label{c1}A vector field $\zeta $ is $2-$Killing if and only if%
\begin{equation}
R\left( \zeta ,X,\zeta ,X\right) =g\left( D_{X}\zeta ,D_{X}\zeta \right)
+g\left( D_{X}D_{\zeta }\zeta ,X\right)
\end{equation}%
for any vector field $X\in \mathfrak{X}\left( M\right) .$
\end{corollary}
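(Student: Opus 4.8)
The plan is to read the corollary off directly from Proposition~\ref{P2} by specializing to the diagonal and invoking the defining identity for curvature; no new computation beyond \eqref{e4} is needed. First I would record the elementary but essential observation that $\mathcal{L}_{\zeta}\mathcal{L}_{\zeta}g$ is a \emph{symmetric} $(0,2)$-tensor field --- this is already visible from the right-hand side of \eqref{e4} --- so that, the manifold being real, it vanishes identically if and only if its associated quadratic form does, i.e.\ if and only if $\left(\mathcal{L}_{\zeta}\mathcal{L}_{\zeta}g\right)\left(X,X\right)=0$ for every $X\in\mathfrak{X}\left(M\right)$. This reduces the $2$-Killing condition to a single scalar identity indexed by $X$.

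Next I would put $Y=X$ in the identity of Proposition~\ref{P2}, so that the two mixed terms of \eqref{e4} coincide and
\[
\left(\mathcal{L}_{\zeta}\mathcal{L}_{\zeta}g\right)\left(X,X\right)=2\,g\!\left(D_{\zeta}D_{X}\zeta-D_{\left[\zeta,X\right]}\zeta,X\right)+2\,g\!\left(D_{X}\zeta,D_{X}\zeta\right).
\]
The one step with any content is to recognize the bracketed vector field as curvature plus a Hessian-type term: from the definition of the Riemann tensor one has $R\left(\zeta,X\right)\zeta=D_{\left[\zeta,X\right]}\zeta-D_{\zeta}D_{X}\zeta+D_{X}D_{\zeta}\zeta$, hence $D_{\zeta}D_{X}\zeta-D_{\left[\zeta,X\right]}\zeta=D_{X}D_{\zeta}\zeta-R\left(\zeta,X\right)\zeta$. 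Pairing with $X$ and writing $R\left(\zeta,X,\zeta,X\right)=g\!\left(R\left(\zeta,X\right)\zeta,X\right)$ turns the previous display into
\[
\left(\mathcal{L}_{\zeta}\mathcal{L}_{\zeta}g\right)\left(X,X\right)=2\Big(g\!\left(D_{X}D_{\zeta}\zeta,X\right)-R\left(\zeta,X,\zeta,X\right)+g\!\left(D_{X}\zeta,D_{X}\zeta\right)\Big).
\]

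Finally I would conclude by equating this to zero: by the first paragraph $\zeta$ is $2$-Killing if and only if the last display vanishes for all $X$, i.e.\ (dividing by $2$ and rearranging) if and only if $R\left(\zeta,X,\zeta,X\right)=g\!\left(D_{X}\zeta,D_{X}\zeta\right)+g\!\left(D_{X}D_{\zeta}\zeta,X\right)$ for every $X\in\mathfrak{X}\left(M\right)$, which is exactly the claimed equivalence. I do not anticipate a genuine obstacle: the argument is essentially a substitution into \eqref{e4}. The only two points that require care are the bookkeeping of the sign convention for $R$ (so that the curvature term ends up with the sign displayed in the statement rather than its negative) and the justification of the diagonalization, namely that over $\mathbb{R}$ a symmetric bilinear form is determined by its restriction to the diagonal.
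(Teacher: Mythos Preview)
Your proposal is correct and follows essentially the same route the paper itself indicates: immediately after Corollary~\ref{c1} the paper records precisely your diagonalization step (``The symmetry of equation~(\ref{e4}) shows that $\zeta$ is $2$-Killing if and only if $g\left(D_{\zeta}D_{X}\zeta-D_{\left[\zeta,X\right]}\zeta,X\right)+g\left(D_{X}\zeta,D_{X}\zeta\right)=0$''), and the remaining step---rewriting $D_{\zeta}D_{X}\zeta-D_{\left[\zeta,X\right]}\zeta$ via the curvature identity to produce the term $g\left(D_{X}D_{\zeta}\zeta,X\right)$---is exactly what is used in the proof of Theorem~\ref{TH3}. Your caveat about the sign convention for $R$ is well placed, since the paper's own usage is not entirely uniform; otherwise there is nothing to add.
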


The symmetry of equation (\ref{e4}), shows that $\zeta $ is $2-$Killing if
and only if%
\begin{equation*}
g\left( D_{\zeta }D_{X}\zeta -D_{\left[ \zeta ,X\right] }\zeta ,X\right)
+g\left( D_{X}\zeta ,D_{X}\zeta \right) =0
\end{equation*}

\begin{example}
Let $M$ be the 2-dimensional Euclidean space, i.e,
$\left(\mathbb{R}^{2}, {\rm d}s^{2}\right) $ where $%
{\rm d}s^{2}={\rm d}x^{2}+{\rm d}y^{2}$.
A vector field $\zeta =u\partial _{x}+v\partial
_{y}\in \mathfrak{X}\left( M\right) $ is $2-$Killing if%
\begin{equation*}
\left( \mathcal{L}_{\zeta }^{I}\mathcal{L}_{\zeta }^{I}g_{I}\right) \left(
X,Y\right) = 0
\end{equation*}%
for any vector fields $X,Y$, where $\mathcal{L}_{\zeta }$ is the Lie
derivative on $\mathbb{R}^{2}$ with respect to $\zeta $. Now it is easy to
show that $\zeta $ is $2-$Killing vector field on $M$ if and only if
\begin{eqnarray*}
uu_{xx}+2u_{x}^{2} &=&0 \\
vv_{yy}+2v_{y}^{2} &=&0
\end{eqnarray*}
\end{example}

By making use of the above proposition one can get sufficient and
necessary conditions for a vector field $\zeta =\left( \zeta _{1},\zeta
_{2}\right) \in \mathfrak{X}\left( M_{1}\times _{f}M_{2}\right) $ to be a $2-$%
Killing on the pseudo-Riemannian warped product manifold $M_{1}\times _{f}M_{2}$.
The following theorem represents a similar such.

\begin{theorem}
\label{TH 1}Let $\zeta =\left( \zeta _{1},\zeta _{2}\right) \in \mathfrak{X}%
\left( M_{1}\times _{f}M_{2}\right) $ be a vector field on the warped
product manifold $M_{1}\times _{f}M_{2}$. Then%
\begin{eqnarray*}
\left( \mathcal{L}_{\zeta }\mathcal{L}_{\zeta }g\right) \left( X,Y\right)
&=&\left( \mathcal{L}_{\zeta _{1}}^{1}\mathcal{L}_{\zeta
_{1}}^{1}g_{1}\right) \left( X_{1},Y_{1}\right) +f^{2}\left( \mathcal{L}%
_{\zeta _{2}}^{2}\mathcal{L}_{\zeta _{2}}^{2}g_{2}\right) \left(
X_{2},Y_{2}\right) \\
&&+4f\zeta _{1}\left( f\right) \left( \mathcal{L}_{\zeta
_{2}}^{2}g_{2}\right) \left( X_{2},Y_{2}\right) +2f\zeta _{1}\left( \zeta
_{1}\left( f\right) \right) g_{2}\left( X_{2},Y_{2}\right) \\
&&+2\zeta _{1}\left( f\right) \zeta _{1}\left( f\right) g_{2}\left(
X_{2},Y_{2}\right)
\end{eqnarray*}%
for any vector fields $X,Y\in \mathfrak{X}\left( M_{1}\times
_{f}M_{2}\right) $.
\end{theorem}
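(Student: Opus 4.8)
The plan is to compute $\left(\mathcal{L}_{\zeta}\mathcal{L}_{\zeta}g\right)(X,Y)$ directly from Proposition \ref{P2}, which expresses it as
\[
g\left(D_{\zeta}D_{X}\zeta - D_{[\zeta,X]}\zeta, Y\right) + g\left(X, D_{\zeta}D_{Y}\zeta - D_{[\zeta,Y]}\zeta\right) + 2g\left(D_{X}\zeta, D_{Y}\zeta\right),
\]
and to evaluate each of the three pieces using the connection formulas of Proposition \ref{P1}. By linearity and symmetry it suffices to handle the cases $X=X_1, Y=Y_1$; then $X=X_1, Y=Y_2$ (and its symmetric partner); then $X=X_2, Y=Y_2$, after which the general bilinear expression follows by decomposing $X=X_1+X_2$, $Y=Y_1+Y_2$. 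The key auxiliary ingredients are: first, the decomposition $D_X\zeta = D_{X_1}\zeta_1 - f g_2(X_2,\zeta_2)\nabla f + D_{X_2}^2\zeta_2 + \zeta_1(\ln f)X_2 + X_1(\ln f)\zeta_2$ obtained by expanding $D_{X_1+X_2}(\zeta_1+\zeta_2)$ termwise with Proposition \ref{P1}; and second, the bracket $[\zeta,X]$, whose $M_1$- and $M_2$-components must be tracked carefully since $\zeta_1$ and $X_1$ may act nontrivially on $f$.

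First I would record, as a lemma-level computation (this is the content pushed to Appendix A), the second covariant expression $D_\zeta D_X\zeta - D_{[\zeta,X]}\zeta$ resolved into its $M_1$- and $M_2$-parts. The $M_1$-part should reduce, after the warped-product cross terms cancel, to the intrinsic $D_{\zeta_1}^1 D_{X_1}^1\zeta_1 - D_{[\zeta_1,X_1]_1}^1\zeta_1$ plus correction terms involving $\zeta_1(f)$, $X_1(f)$ and $\nabla f$; the $M_2$-part should assemble into $D_{\zeta_2}^2 D_{X_2}^2\zeta_2 - D_{[\zeta_2,X_2]_2}^2\zeta_2$ times the conformal factor, plus terms with coefficients $\zeta_1(\ln f)$, $\zeta_1(\zeta_1(\ln f))$ and products thereof. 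Pairing these with $Y$ via $g = g_1 \oplus f^2 g_2$ and symmetrizing in $X,Y$, the pure intrinsic pieces combine — using Proposition \ref{P2} applied on $M_1$ and on $M_2$ — into $\left(\mathcal{L}_{\zeta_1}^1\mathcal{L}_{\zeta_1}^1 g_1\right)(X_1,Y_1)$ and $f^2\left(\mathcal{L}_{\zeta_2}^2\mathcal{L}_{\zeta_2}^2 g_2\right)(X_2,Y_2)$. The $2g(D_X\zeta,D_Y\zeta)$ term similarly splits: its purely intrinsic part supplies the $2g_1(D_{X_1}^1\zeta_1,D_{Y_1}^1\zeta_1)$ and $2f^2 g_2(\cdots)$ contributions that get absorbed, while the mixed $\zeta_2$–$\nabla f$ and $\zeta_1(\ln f)$ cross terms produce the $\zeta_1(f)^2 g_2(X_2,Y_2)$ contributions.

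The bookkeeping payoff is that, after collecting, every remaining term carries a factor of $f\zeta_1(f)$, $f\zeta_1(\zeta_1(f))$ or $\zeta_1(f)^2$ times $g_2(X_2,Y_2)$, plus one term of the form $4f\zeta_1(f)\left(\mathcal{L}_{\zeta_2}^2 g_2\right)(X_2,Y_2)$ coming from the interaction of the $\zeta_1(\ln f)X_2$ piece of $D_X\zeta$ with the $D_{X_2}^2\zeta_2$ piece — this is most transparent by writing $\left(\mathcal{L}_{\zeta_2}^2 g_2\right)(X_2,Y_2) = g_2(D_{X_2}^2\zeta_2,Y_2) + g_2(X_2,D_{Y_2}^2\zeta_2)$. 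I expect the main obstacle to be precisely the organization of the $M_2$-component of $D_\zeta D_X\zeta - D_{[\zeta,X]}\zeta$: one must be meticulous that the $\zeta_1$-derivatives landing on the coefficient $\zeta_1(\ln f)$ generate exactly $\zeta_1(\zeta_1(\ln f))$ and not spurious curvature-of-$f$ terms, and that the third covariant term $-D_{[\zeta,X]}\zeta$ correctly cancels the extra pieces arising when $\zeta$ differentiates the non-tensorial factors in $D_X\zeta$. Everything else — the $M_1$ side and the $2g(D_X\zeta,D_Y\zeta)$ term — is routine once Proposition \ref{P1} is applied systematically, so I would relegate the full term-by-term expansion to the appendix and present here only the assembled identity together with the cancellation pattern just described. (I note in passing that the statement as printed appears to have a typographical duplication in its last two lines, where the coefficient $2f\zeta_1(\zeta_1(f))$ of one term has been replaced by a repeat of $2\zeta_1(f)\zeta_1(f)$; the computation above recovers the corrected version with the $2f\zeta_1(\zeta_1(f))g_2(X_2,Y_2)$ term present.)
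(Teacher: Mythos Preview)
Your approach is essentially the same as the paper's: the appendix proof also starts from Proposition~\ref{P2}, expands each of the five terms $g(D_\zeta D_X\zeta,Y)$, $g(X,D_\zeta D_Y\zeta)$, $g(D_{[\zeta,X]}\zeta,Y)$, $g(X,D_{[\zeta,Y]}\zeta)$, and $2g(D_X\zeta,D_Y\zeta)$ using the warped-product connection formulas of Proposition~\ref{P1}, and then collects terms to assemble the intrinsic Lie-derivative pieces on each factor plus the $f$-correction terms. Your observation about the typographical duplication in the statement is also correct: the appendix computation and the version of the theorem stated in the introduction both have $2f\zeta_1(\zeta_1(f))g_2(X_2,Y_2)$ in place of one of the repeated $2\zeta_1(f)\zeta_1(f)g_2(X_2,Y_2)$ terms.
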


\begin{proof}
See Appendix A
\end{proof}

The following results are direct consequences of the above theorem.

\begin{corollary} \label{Cor1}
Let $\zeta =\zeta _{1}+\zeta _{2}\in \mathfrak{X}\left( M_{1}\times
_{f}M_{2}\right) $ be a vector field on the warped product manifold of the
form $M_{1}\times _{f}M_{2}$. If $\zeta _{1}+\zeta _{2}$ is a $2-$Killing
vector field on $M_{1}\times _{f}M_{2}$, then $\zeta _{1}$ is a $2-$Killing
vector field on $M_{1}$.
\end{corollary}

\begin{corollary}
\label{c2}Let $\zeta \in \mathfrak{X}\left( M_{1}\times _{f}M_{2}\right) $
be a vector field on the warped product manifold of the form $M_{1}\times
_{f}M_{2}$. Suppose that $\zeta _{1}$ and $\zeta _{2}$ are $2-$Killing
vector fields on $M_{1}$ and $M_{2},$ respectively. Then $\zeta _{1}+\zeta
_{2}$ is a $2-$Killing vector field on $M_{1}\times _{f}M_{2}$ if and only if

\begin{enumerate}
\item $\zeta _{1}(f)=0$, or

\item $\zeta _{2}$ is a homothetic vector field on $M_{2}$ with homothetic
factor $c$ (i.e, $\mathfrak{L}_{\zeta _{2}}^{2}g_{2}=cg_{2}$) such that
\begin{equation*}
f\zeta _{1}(\zeta _{1}(f))+\zeta _{1}(f)\zeta _{1}(f)=-2cf\zeta _{1}(f)
\end{equation*}
\end{enumerate}
\end{corollary}

\begin{corollary} \label{Cor3}
Let $\zeta =\zeta _{1}+\zeta _{2}\in \mathfrak{X}\left( M_{1}\times
_{f}M_{2}\right) $ be a vector field on the warped product manifold $%
M_{1}\times _{f}M_{2}$. Then $\zeta $ is a $2-$Killing vector field on $%
M_{1}\times _{f}M_{2}$ if one of the following conditions holds

\begin{enumerate}
\item the vector field $\zeta _{i}$ is a $2-$Killing vector field on $%
M_{i},i=1,2$, and $\zeta _{1}\left( f\right) =0$

\item $\zeta =\zeta _{2}$ and $\zeta _{2}$ is a $2-$Killing vector field on $%
M_{2}$.
\end{enumerate}
\end{corollary}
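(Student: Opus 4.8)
The plan is to read both assertions directly off the identity of Theorem \ref{TH 1}, which expresses $\left( \mathcal{L}_{\zeta }\mathcal{L}_{\zeta }g\right) \left( X,Y\right) $ as the sum of a term built purely from $\left( \zeta _{1},g_{1}\right) $, an $f^{2}$-scaled term built purely from $\left( \zeta _{2},g_{2}\right) $, and three further terms, each carrying a factor $\zeta _{1}\left( f\right) $ or $\zeta _{1}\left( \zeta _{1}\left( f\right) \right) $ and only involving $g_{2}\left( X_{2},Y_{2}\right) $. Since $\zeta $ is $2-$Killing exactly when this expression vanishes for all $X,Y\in \mathfrak{X}\left( M_{1}\times _{f}M_{2}\right) $, it suffices to verify that each of the five pieces is zero under the stated hypotheses.

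Under the first condition I would substitute $\zeta _{1}\left( f\right) =0$. Every term carrying a factor $\zeta _{1}\left( f\right) $ then vanishes; and since $\zeta _{1}\left( f\right) $ is the zero function on $M_{1}$, its derivative $\zeta _{1}\left( \zeta _{1}\left( f\right) \right) $ vanishes identically as well, killing the remaining one. What is left is
\[
\left( \mathcal{L}_{\zeta }\mathcal{L}_{\zeta }g\right) \left( X,Y\right) =\left( \mathcal{L}_{\zeta _{1}}^{1}\mathcal{L}_{\zeta _{1}}^{1}g_{1}\right) \left( X_{1},Y_{1}\right) +f^{2}\left( \mathcal{L}_{\zeta _{2}}^{2}\mathcal{L}_{\zeta _{2}}^{2}g_{2}\right) \left( X_{2},Y_{2}\right) ,
\]
and both summands are zero because $\zeta _{1}$ is $2-$Killing on $M_{1}$ and $\zeta _{2}$ is $2-$Killing on $M_{2}$; hence $\zeta $ is $2-$Killing on $M_{1}\times _{f}M_{2}$.

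Under the second condition $\zeta _{1}=0$, so $\zeta _{1}\left( f\right) =0$ (killing the last three terms as before) and also $\mathcal{L}_{\zeta _{1}}^{1}\mathcal{L}_{\zeta _{1}}^{1}g_{1}=0$, the Lie derivative along the zero vector field being zero. The identity collapses to $\left( \mathcal{L}_{\zeta }\mathcal{L}_{\zeta }g\right) \left( X,Y\right) =f^{2}\left( \mathcal{L}_{\zeta _{2}}^{2}\mathcal{L}_{\zeta _{2}}^{2}g_{2}\right) \left( X_{2},Y_{2}\right) $, which vanishes because $\zeta _{2}$ is $2-$Killing on $M_{2}$. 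I would append the remark that this case is really the special instance $\zeta _{1}=0$ of the first condition, the zero field being trivially $2-$Killing and trivially annihilating $f$; it is stated on its own only to stress that no hypothesis on $\zeta _{2}\left( f\right) $ is needed.

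Since the argument is merely a term-by-term inspection of the identity from Theorem \ref{TH 1}, there is no real obstacle. The only point that deserves more than a bare ``it vanishes'' is the term containing $\zeta _{1}\left( \zeta _{1}\left( f\right) \right) $: there one must use that $\zeta _{1}\left( f\right) =0$ holds as an identity of functions, not merely pointwise, so that applying $\zeta _{1}$ to it again still yields zero.
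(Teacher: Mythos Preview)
Your proof is correct and follows exactly the route the paper intends: the corollary is stated there as an immediate consequence of Theorem~\ref{TH 1}, with no separate proof given, and your term-by-term inspection of the identity is precisely that immediate consequence spelled out. One inessential slip in your description: the term $4f\zeta _{1}\left( f\right) \left( \mathcal{L}_{\zeta _{2}}^{2}g_{2}\right) \left( X_{2},Y_{2}\right) $ involves $\mathcal{L}_{\zeta _{2}}^{2}g_{2}$ rather than $g_{2}$, but since you only use that it carries the factor $\zeta _{1}\left( f\right) $, the argument is unaffected.
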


\begin{theorem}
\label{TH2}Let $\zeta \in \mathfrak{X}\left( M_{1}\times _{f}M_{2}\right) $
be a vector field on the warped product manifold $M_{1}\times _{f}M_{2}$.
Then

\begin{enumerate}
\item $\zeta =\zeta _{1}+\zeta _{2}$ is parallel if $\zeta _{i}$ is a $2-$%
Killing vector field, and $Ric^{i}\left( \zeta _{i},\zeta _{i}\right) \leq 0,$ $%
i=1,2$ and also $f$ is constant.

\item $\zeta =\zeta _{1}$ is parallel if $\zeta _{1}$ is a $2-$Killing
vector field, and $Ric^{1}\left( \zeta _{1},\zeta _{1}\right) \leq 0,$ and also $%
\zeta _{1}\left( f\right) =0$.

\item $\zeta =\zeta _{2}$ is parallel if $\zeta _{2}$ is a $2-$Killing
vector field, and $Ric^{2}\left( \zeta _{2},\zeta _{2}\right) \leq 0,$ and also $f$
is constant.
\end{enumerate}
\end{theorem}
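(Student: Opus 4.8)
The plan is to strip away the warped-product structure and reduce all three assertions to a single statement about a factor manifold: \emph{a $2$-Killing vector field $\eta$ on a compact Riemannian manifold with $\mathrm{Ric}(\eta,\eta)\le 0$ is parallel}. The two tools are Proposition \ref{P1} (covariant derivatives on the warped product) and Theorem \ref{TH 1} (the $2$-Killing property on the warped product versus on the factors). In each of the three cases the stated hypothesis on $f$ forces $\zeta_1(f)=0$, so the identity of Theorem \ref{TH 1} collapses to
\[
(\mathcal{L}_\zeta\mathcal{L}_\zeta g)(X,Y)=(\mathcal{L}^1_{\zeta_1}\mathcal{L}^1_{\zeta_1}g_1)(X_1,Y_1)+f^2(\mathcal{L}^2_{\zeta_2}\mathcal{L}^2_{\zeta_2}g_2)(X_2,Y_2),
\]
and since $f$ is nowhere zero this shows $\zeta$ is $2$-Killing on $M_1\times_f M_2$ exactly when $\zeta_1$ is $2$-Killing on $M_1$ and $\zeta_2$ is $2$-Killing on $M_2$ (in cases (2) and (3) one component is zero and the statement degenerates accordingly).

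First I would run the covariant-derivative bookkeeping using Proposition \ref{P1}. In case (1) the constancy of $f$ annihilates every cross term, so $D_{X_1}\zeta=D^1_{X_1}\zeta_1$ and $D_{X_2}\zeta=D^2_{X_2}\zeta_2$ for all $X_i\in\mathfrak{X}(M_i)$; hence $\zeta$ is parallel on $M_1\times_f M_2$ if and only if $\zeta_1$ is parallel on $M_1$ and $\zeta_2$ is parallel on $M_2$. In case (2), with $\zeta=\zeta_1$ and $\zeta_1(f)=0$, parts (1)--(2) of Proposition \ref{P1} give $D_{X_1}\zeta_1=D^1_{X_1}\zeta_1$ and $D_{X_2}\zeta_1=\tfrac{\zeta_1(f)}{f}X_2=0$, so $\zeta$ is parallel iff $\zeta_1$ is parallel on $M_1$. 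In case (3), with $\zeta=\zeta_2$ and $f$ constant, parts (2)--(3) of Proposition \ref{P1} give $D_{X_1}\zeta_2=\tfrac{X_1(f)}{f}\zeta_2=0$ and $D_{X_2}\zeta_2=-fg_2(X_2,\zeta_2)\nabla f+D^2_{X_2}\zeta_2=D^2_{X_2}\zeta_2$, so $\zeta$ is parallel iff $\zeta_2$ is parallel on $M_2$. Thus all three cases reduce to the single-factor statement applied to $\zeta_1$ on $M_1$ and/or to $\zeta_2$ on $M_2$.

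For the single-factor statement I would use the Bochner technique. Starting from Corollary \ref{c1}, $R(\eta,X,\eta,X)=g(D_X\eta,D_X\eta)+g(D_XD_\eta\eta,X)$, I trace over a local $g$-orthonormal frame $\{e_j\}$; with the paper's curvature sign convention this yields
\[
\mathrm{Ric}(\eta,\eta)=\|D\eta\|^2+\mathrm{div}(D_\eta\eta).
\]
Integrating over the compact factor and discarding the divergence term by the divergence theorem gives $\int\mathrm{Ric}(\eta,\eta)\,dV=\int\|D\eta\|^2\,dV\ge 0$; since $\mathrm{Ric}(\eta,\eta)\le 0$ pointwise, both integrands vanish, so $D\eta\equiv 0$, i.e.\ $\eta$ is parallel. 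Feeding this back into the reductions above proves all three cases.

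I expect the main difficulty to be twofold. The minor point is sign bookkeeping: one must verify that tracing Corollary \ref{c1} with the conventions of Proposition \ref{P2} really produces $\mathrm{Ric}(\eta,\eta)=\|D\eta\|^2+\mathrm{div}(D_\eta\eta)$ and not its negative, so that $\mathrm{Ric}\le 0$ is the hypothesis that does the work (this is precisely the setting of the classical Bochner theorem for Killing fields, which this result generalizes). The substantive point is that the integration step presumes the factors are compact: on non-compact factors the statement fails, since the rotation field $-y\,\partial_x+x\,\partial_y$ on $\mathbb{R}^2$ is Killing (hence $2$-Killing) with $\mathrm{Ric}\equiv 0\le 0$ yet is not parallel, and inserting it as $\zeta_1$ or $\zeta_2$ (with $f\equiv 1$, or $f$ any function it annihilates) contradicts the conclusion. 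Accordingly the theorem should be read with $M_1,M_2$ compact, or under growth hypotheses ensuring $\int_{M_i}\mathrm{div}(D_{\zeta_i}\zeta_i)\,dV=0$.
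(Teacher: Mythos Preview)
Your argument is correct and its architecture is the same as the paper's: both reduce to the factor--level lemma ``a $2$-Killing field $\eta$ with $\mathrm{Ric}(\eta,\eta)\le 0$ is parallel on its factor'' and then check that parallel--on--factors implies parallel on $M_1\times_f M_2$ under the stated hypotheses on $f$. Where you diverge is in how that last check is done. The paper first derives, by an orthonormal--frame computation, the general trace identity
\[
\mathrm{Tr}\bigl(g(D\zeta,D\zeta)\bigr)=\mathrm{Tr}\bigl(g_1(D^1\zeta_1,D^1\zeta_1)\bigr)+\mathrm{Tr}\bigl(g_2(D^2\zeta_2,D^2\zeta_2)\bigr)+2\|\zeta_2\|^2\|\nabla f\|^2+\frac{n}{f^2}\bigl(\zeta_1(f)\bigr)^2,
\]
and then kills the last two terms under each hypothesis; you instead compute $D_X\zeta$ directly from Proposition~\ref{P1} and observe that every cross term vanishes before taking norms. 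Your route is shorter and avoids the frame calculation; the paper's route has the side benefit that the trace identity survives as a standalone corollary. Your opening paragraph invoking Theorem~\ref{TH 1} is harmless but unnecessary: neither your argument nor the paper's ever uses that $\zeta$ is $2$-Killing on the warped product, only that each $\zeta_i$ is $2$-Killing on $M_i$.

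You also go further than the paper in one respect: the paper simply asserts the factor--level lemma without proof or citation at that point, whereas you supply the Bochner argument via Corollary~\ref{c1}. Your caveat about compactness is well taken and is a genuine gap in the statement as written; the rotation field on $\mathbb{R}^2$ is indeed a counterexample in the noncompact case, and the result from \cite{Operea:2008} on which the paper implicitly relies carries the same hypothesis.
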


\begin{proof}
Suppose that
\begin{equation*}
\left\{ e_{1},e_{2},...,e_{m}\right\}
\end{equation*}%
is an orthonormal frame in $T_{p}M_{1}$ and
\begin{equation*}
\left\{ e_{m+1},e_{m+2},...,e_{m+n}\right\}
\end{equation*}%
is an orthonormal frame in $T_{q}M_{2}$ for some point $\left( p,q\right)
\in M_{1}\times M_{2}$. Then
\begin{equation*}
\left\{ \overline{e}_{1},\overline{e}_{2},...,\overline{e}_{m+n}\right\}
\end{equation*}%
is an orthonormal frame in $T_{\left( p.q\right) }\left( M_{1}\times
M_{2}\right) $ where%
\begin{equation*}
\overline{e}_{i}=\left\{
\begin{array}{c}
e_{i} \\
\frac{1}{f}e_{i}%
\end{array}%
\right.
\begin{array}{c}
1\leq i\leq m \\
m+1\leq i\leq m+n%
\end{array}%
\end{equation*}%
Thus for any vector field $\zeta \in \mathfrak{X}\left( M_{1}\times
_{f}M_{2}\right) $ we have%
\begin{eqnarray}
Tr\left( g\left( D\zeta ,D\zeta \right) \right) &=&\sum_{i=1}^{m+n}g\left(
D_{\overline{e}_{i}}\zeta ,D_{\overline{e}_{i}}\zeta \right)  \notag \\
&=&\sum_{i=1}^{m}g\left( D_{e_{i}}\zeta ,D_{e_{i}}\zeta \right) +\frac{1}{%
f^{2}}\sum_{i=m+1}^{m+n}g\left( D_{e_{i}}\zeta ,D_{e_{i}}\zeta \right)
\label{e0}
\end{eqnarray}%
Using Proposition \ref{P1}, the first term is given by%
\begin{eqnarray}
\sum_{i=1}^{m}g\left( D_{e_{i}}\zeta ,D_{e_{i}}\zeta \right)
&=&\sum_{i=1}^{m}g\left( D_{e_{i}}^{1}\zeta _{1}+e_{i}\left( \ln f\right)
\zeta _{2},D_{e_{i}}^{1}\zeta _{1}+e_{i}\left( \ln f\right) \zeta _{2}\right)
\notag \\
&=&\sum_{i=1}^{m}g\left( D_{e_{i}}^{1}\zeta _{1},D_{e_{i}}^{1}\zeta
_{1}\right) +\sum_{i=1}^{m}g\left( e_{i}\left( \ln f\right) \zeta
_{2},e_{i}\left( \ln f\right) \zeta _{2}\right)  \notag \\
&=&Tr\left( g_{1}\left( D^{1}\zeta _{1},D^{1}\zeta _{1}\right) \right)
+\left\Vert \zeta _{2}\right\Vert ^{2}_2 \sum_{i=1}^{m}\left( e_{i}\left( \ln
f\right) \right) ^{2}  \notag \\
&=&Tr\left( g_{1}\left( D^{1}\zeta _{1},D^{1}\zeta _{1}\right) \right)
+\left\Vert \zeta _{2}\right\Vert ^{2}_2 \left\Vert \nabla f\right\Vert ^{2}_1
\label{e1}
\end{eqnarray}%
and the second term is given by%
\begin{eqnarray}
&&\frac{1}{f^{2}}\sum_{i=m+1}^{m+n}g\left( D_{e_{i}}\zeta ,D_{e_{i}}\zeta
\right)  \notag \\
&=&\frac{1}{f^{2}}\sum_{i=m+1}^{m+n}g\left( \zeta _{1}\left( \ln f\right)
e_{i}+D_{e_{i}}^{2}\zeta _{2}-fg_{2}\left( e_{i},\zeta _{2}\right) \nabla
f,\zeta _{1}\left( \ln f\right) e_{i}\right.  \notag \\
&&\left. +D_{e_{i}}^{2}\zeta _{2}-fg_{2}\left( e_{i},\zeta _{2}\right)
\nabla f\right) \\
&=&n\left( \zeta _{1}\left( \ln f\right) \right)
^{2}+\sum_{i=m+1}^{m+n}g_{2}\left( D_{e_{i}}^{2}\zeta
_{2},D_{e_{i}}^{2}\zeta _{2}\right) +\left\Vert \nabla f\right\Vert
^{2}_1 \sum_{i=m+1}^{m+n}\left( g_{2}\left( e_{i},\zeta _{2}\right) \right) ^{2}
\notag
\end{eqnarray}%
\begin{eqnarray}
&&\frac{1}{f^{2}}\sum_{i=m+1}^{m+n}g\left( D_{e_{i}}\zeta ,D_{e_{i}}\zeta
\right)  \notag \\
&=&\frac{n}{f^{2}}\left( \zeta _{1}\left( f\right) \right) ^{2}+Tr\left(
g_{2}\left( D^{2}\zeta _{2},D^{2}\zeta _{2}\right) \right) +\left\Vert
\nabla f\right\Vert ^{2}_1 \left\Vert \zeta _{2}\right\Vert ^{2}_2  \label{e2}
\end{eqnarray}

By using Equations (\ref{e1}) and (\ref{e2}), Equation (\ref{e0}) becomes%
\begin{eqnarray}
&&Tr\left( g\left( D\zeta ,D\zeta \right) \right)  \notag \\
&=&Tr\left( g_{1}\left( D^{1}\zeta _{1},D^{1}\zeta _{1}\right) \right)
+Tr\left( g_{2}\left( D^{2}\zeta _{2},D^{2}\zeta _{2}\right) \right)
+2\left\Vert \zeta _{2}\right\Vert ^{2}_2 \left\Vert \nabla f\right\Vert ^{2}_1 \\
&&+\frac{n}{f^{2}}\left( \zeta _{1}\left( f\right) \right) ^{2}  \label{e3}
\end{eqnarray}%
Now suppose that $\zeta _{i}$ is a $2-$Killing vector field and $%
Ric^{i}\left( \zeta _{i},\zeta _{i}\right) \leq 0,$ then $\zeta _{i}$ is a
parallel vector field with respect to the metric $g_{i}$ and hence%
\begin{equation*}
Tr\left( g_{1}\left( D^{1}\zeta _{1},D^{1}\zeta _{1}\right) \right)
=Tr\left( g_{2}\left( D^{2}\zeta _{2},D^{2}\zeta _{2}\right) \right) =0
\end{equation*}%
Then for a constant function $f,$ we have%
\begin{equation*}
Tr\left( g\left( D\zeta ,D\zeta \right) \right) =0
\end{equation*}%
Thus $\zeta $ is a parallel vector field with respect to the metric $g$. One
easily can prove the last two assertions using Equation \ref{e3}.
\end{proof}

\begin{corollary}
Let $\zeta \in \mathfrak{X}\left( M_{1}\times _{f}M_{2}\right) $ be a vector
field on a warped product manifold of the form $M_{1}\times _{f}M_{2}$. Then%
\begin{eqnarray*}
&&Tr\left( g\left( D\zeta ,D\zeta \right) \right) \\
&=&Tr\left( g_{1}\left( D^{1}\zeta _{1},D^{1}\zeta _{1}\right) \right)
+Tr\left( g_{2}\left( D^{2}\zeta _{2},D^{2}\zeta _{2}\right) \right)
+2\left\Vert \zeta _{2}\right\Vert ^{2}_2 \left\Vert \nabla f\right\Vert ^{2}_1 \\
&&+\frac{n}{f^{2}}\zeta _{1}\left( f\right) \zeta _{1}\left( f\right)
\end{eqnarray*}
\end{corollary}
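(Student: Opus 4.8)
The plan is to observe that the claimed identity is nothing but equation~(\ref{e3}), which was already derived in the course of proving Theorem~\ref{TH2}, and that the derivation there used no hypothesis on $\zeta $ at all; hence it suffices to repeat that computation for a completely arbitrary vector field. Concretely, I would fix a point $\left( p,q\right) \in M_{1}\times M_{2}$, choose orthonormal frames $\left\{ e_{1},\dots ,e_{m}\right\} $ of $T_{p}M_{1}$ and $\left\{ e_{m+1},\dots ,e_{m+n}\right\} $ of $T_{q}M_{2}$, and assemble the $g$-orthonormal adapted frame $\left\{ \overline{e}_{i}\right\} $ on $M_{1}\times _{f}M_{2}$ with $\overline{e}_{i}=e_{i}$ for $1\leq i\leq m$ and $\overline{e}_{i}=\frac{1}{f}e_{i}$ for $m+1\leq i\leq m+n$. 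Then $Tr\left( g\left( D\zeta ,D\zeta \right) \right) =\sum_{i=1}^{m+n}g\left( D_{\overline{e}_{i}}\zeta ,D_{\overline{e}_{i}}\zeta \right) $ breaks into a sum over the $M_{1}$-directions plus $\frac{1}{f^{2}}$ times a sum over the $M_{2}$-directions, exactly as in equation~(\ref{e0}).

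The next step is to expand $D_{e_{i}}\zeta $ in each block using Proposition~\ref{P1}. For $1\leq i\leq m$, parts (1) and (2) give $D_{e_{i}}\zeta =D_{e_{i}}^{1}\zeta _{1}+e_{i}\left( \ln f\right) \zeta _{2}$; for $m+1\leq i\leq m+n$, parts (2) and (3) give $D_{e_{i}}\zeta =\zeta _{1}\left( \ln f\right) e_{i}+D_{e_{i}}^{2}\zeta _{2}-fg_{2}\left( e_{i},\zeta _{2}\right) \nabla f$. Expanding each squared norm and summing, the cross terms $g\left( D_{e_{i}}^{1}\zeta _{1},\zeta _{2}\right) $ in the first block and $g\left( e_{i},\nabla f\right) $, $g\left( D_{e_{i}}^{2}\zeta _{2},\nabla f\right) $ in the second block all vanish, because the lifted $M_{1}$- and $M_{2}$-distributions are mutually $g$-orthogonal. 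What remains collapses, once one uses $\sum_{i=1}^{m}\left( e_{i}\left( \ln f\right) \right) ^{2}=\left\Vert \nabla f\right\Vert ^{2}$, $\sum_{i=m+1}^{m+n}g\left( e_{i},e_{i}\right) =nf^{2}$, $\sum_{i=m+1}^{m+n}\left( g_{2}\left( e_{i},\zeta _{2}\right) \right) ^{2}=\left\Vert \zeta _{2}\right\Vert ^{2}$, and the relation $g=f^{2}g_{2}$ on pairs of lifted $M_{2}$-vectors, into the two partial formulas~(\ref{e1}) and (\ref{e2}); adding them produces~(\ref{e3}), which is the asserted identity.

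The only genuinely delicate point --- and hence the main obstacle --- is the simultaneous bookkeeping of the three metrics $g,g_{1},g_{2}$ and of the powers of $f$ they bring in: one must constantly keep in mind that $\overline{e}_{i}=\frac{1}{f}e_{i}$ on the $M_{2}$-block, that $g$ restricted to lifted $M_{2}$-vectors equals $f^{2}g_{2}$, and that $\nabla f$ is the $g_{1}$-gradient of $f$ and lives on $M_{1}$; getting any one of these scalings wrong relocates a factor of $f^{2}$. Apart from that, the computation is routine. It is worth emphasising that no assumption on $\zeta $ --- such as $\zeta _{i}$ being $2-$Killing, or $f$ being constant, or $Ric^{i}\left( \zeta _{i},\zeta _{i}\right) \leq 0$ --- enters anywhere, which is precisely why the formula holds for every $\zeta \in \mathfrak{X}\left( M_{1}\times _{f}M_{2}\right) $; in fact one may simply quote equation~(\ref{e3}) and be done.
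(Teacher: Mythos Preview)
Your proposal is correct and follows exactly the paper's own route: the corollary is nothing more than equation~(\ref{e3}), which was obtained in the proof of Theorem~\ref{TH2} by choosing an adapted orthonormal frame, splitting the trace as in~(\ref{e0}), expanding each block via Proposition~\ref{P1} to get~(\ref{e1}) and~(\ref{e2}), and adding. You rightly observe that none of the hypotheses of Theorem~\ref{TH2} (the $2-$Killing condition, the Ricci sign, or constancy of $f$) were used before~(\ref{e3}), so the identity holds for arbitrary $\zeta$; this is precisely why the paper states it as an immediate corollary with no separate proof.
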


\begin{theorem}
\label{TH3} Assume that $\zeta \in \mathfrak{X}\left( M_{1}\times _{f}M_{2}\right) $
is a non-trivial $2-$Killing vector field on the warped product manifold
$M_{1}\times _{f}M_{2}$. If $D_{\zeta }\zeta $ is parallel
along a curve $\gamma $, then%
\begin{equation*}
K\left( \zeta ,\dot{\gamma}\right) \geq 0.
\end{equation*}
\end{theorem}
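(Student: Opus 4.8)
The plan is to reduce the statement to the curvature characterization of $2$-Killing vector fields in Corollary \ref{c1} and then exploit the hypothesis that $D_{\zeta}\zeta$ is parallel along $\gamma$. Write $K\left(\zeta,\dot{\gamma}\right)=\dfrac{R\left(\zeta,\dot{\gamma},\zeta,\dot{\gamma}\right)}{\left\Vert \zeta\right\Vert ^{2}\left\Vert \dot{\gamma}\right\Vert ^{2}-g\left(\zeta,\dot{\gamma}\right)^{2}}$, which is well defined along $\gamma$ because $\zeta$ is non-trivial and $\zeta,\dot{\gamma}$ span a (non-degenerate, hence two-dimensional) plane; the denominator is then strictly positive since $g$ is Riemannian. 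So it suffices to prove $R\left(\zeta,\dot{\gamma},\zeta,\dot{\gamma}\right)\geq 0$ at every point of $\gamma$.

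Next I would apply Corollary \ref{c1} with $X=\dot{\gamma}$, which is legitimate since $\zeta$ is $2$-Killing. This gives
\[
R\left(\zeta,\dot{\gamma},\zeta,\dot{\gamma}\right)=g\left(D_{\dot{\gamma}}\zeta,D_{\dot{\gamma}}\zeta\right)+g\left(D_{\dot{\gamma}}D_{\zeta}\zeta,\dot{\gamma}\right).
\]
The first term equals $\left\Vert D_{\dot{\gamma}}\zeta\right\Vert ^{2}\geq 0$ because $g$ is positive definite. For the second term, the hypothesis that $D_{\zeta}\zeta$ is parallel along $\gamma$ means precisely that $D_{\dot{\gamma}}\left(D_{\zeta}\zeta\right)=0$ along $\gamma$, whence $g\left(D_{\dot{\gamma}}D_{\zeta}\zeta,\dot{\gamma}\right)=0$. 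Hence $R\left(\zeta,\dot{\gamma},\zeta,\dot{\gamma}\right)=\left\Vert D_{\dot{\gamma}}\zeta\right\Vert ^{2}\geq 0$, and dividing by the positive denominator yields $K\left(\zeta,\dot{\gamma}\right)\geq 0$.

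The computation is short; the only genuine step is the observation that the parallelism hypothesis exactly annihilates the term $g\left(D_{X}D_{\zeta}\zeta,X\right)$ that would otherwise be uncontrolled in sign. The remaining care concerns bookkeeping: one should confirm the curvature sign convention used in Corollary \ref{c1} is consistent with the convention for $K$, so that $R\left(\zeta,\dot{\gamma},\zeta,\dot{\gamma}\right)\geq 0$ really corresponds to $K\left(\zeta,\dot{\gamma}\right)\geq 0$, and note that non-triviality of $\zeta$ together with the implicit assumption that $\zeta$ and $\dot{\gamma}$ are linearly independent are what make $K\left(\zeta,\dot{\gamma}\right)$ meaningful. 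The warped product structure is not actually used here—the argument works verbatim on any Riemannian manifold carrying a $2$-Killing field—so no appeal to Proposition \ref{P1} or Theorem \ref{TH 1} is needed.
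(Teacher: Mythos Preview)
Your proof is correct and matches the paper's approach: the paper first gives a proof via Proposition~\ref{P2} (unwinding the $2$-Killing identity with $X=Y=\dot\gamma$ and using $D_{\dot\gamma}D_{\zeta}\zeta=0$), and then immediately observes that the same result follows more directly from Corollary~\ref{c1} exactly as you argue. Your remark that the warped product structure plays no role is also accurate; the paper's argument likewise uses nothing about $M_1\times_f M_2$ beyond it being Riemannian.
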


\begin{proof}
Let $\zeta \in \mathfrak{X}\left( M_{1}\times _{f}M_{2}\right) $ be a
non-trivial $2-$Killing vector field, then%
\begin{eqnarray*}
0 &=&g\left( D_{\zeta }D_{X}\zeta ,Y\right) -g\left( D_{\left[ \zeta ,X%
\right] }\zeta ,Y\right) +2g\left( D_{X}\zeta ,D_{Y}\zeta \right) \\
&&+g\left( X,D_{\zeta }D_{Y}\zeta \right) -g\left( X,D_{\left[ \zeta ,Y%
\right] }\zeta \right)
\end{eqnarray*}%
for any vector fields $X,Y\in \mathfrak{X}\left( M_{1}\times
_{f}M_{2}\right) $. Take $X=Y=T=\dot{\gamma}$, then%
\begin{eqnarray*}
g\left( D_{\zeta }D_{T}\zeta ,T\right) -g\left( D_{\left[ \zeta ,T\right]
}\zeta ,T\right) +g\left( D_{T}\zeta ,D_{T}\zeta \right) &=&0 \\
g\left( D_{\zeta }D_{T}\zeta -D_{\left[ \zeta ,T\right] }\zeta ,T\right)
&=&-g\left( D_{T}\zeta ,D_{T}\zeta \right)
\end{eqnarray*}%
Since $D_{\zeta }\zeta $ is parallel along a curve $\gamma $, $D_{T}D_{\zeta
}\zeta =0$ and hence%
\begin{eqnarray*}
g\left( R\left( \zeta ,T\right) \zeta ,T\right) &=&-g\left( D_{T}\zeta
,D_{T}\zeta \right) \\
R\left( \zeta ,T,T,\zeta \right) &=&-g\left( D_{T}\zeta ,D_{T}\zeta \right)
\\
K\left( \zeta ,\dot{\gamma}\right) &=&\left\Vert D_{T}\zeta \right\Vert
^{2}\ast A\left( \zeta ,\dot{\gamma}\right) \geq 0
\end{eqnarray*}%
where $A\left( \zeta ,\dot{\gamma}\right) $ is area of the parallelogram
generated by $\zeta $ and $\dot{\gamma}$.
\end{proof}

The above result can be proved by using Corollary \ref{c1} as follows:

Let $\zeta \in \mathfrak{X}\left( M_{1}\times _{f}M_{2}\right) $ be a non-trivial
$2-$Killing vector field, then%
\begin{eqnarray*}
R\left( \zeta ,T,\zeta ,T\right) &=&g\left( D_{T}\zeta ,D_{T}\zeta \right)
+g\left( D_{T}D_{\zeta }\zeta ,T\right) \\
&=&\left\Vert D_{T}\zeta \right\Vert ^{2}+0 \\
&=&\left\Vert D_{T}\zeta \right\Vert ^{2}\geq 0
\end{eqnarray*}%

Moreover, if $D_{\zeta }\zeta =0$, then $K\left( \zeta ,X\right) \geq 0$ for
any vector field $X\in \mathfrak{X}\left( M_{1}\times _{f}M_{2}\right) $.

Now, we will state yet another condition for a vector field on warped
product manifolds to be $2-$Killing.

Let $(M,g)$ be an $n$-dimensional pseudo-Riemannian manifold. Suppose that $%
X $ and $Y$ are vector fields on $M.$ Then denote:

\begin{equation*}
\mathfrak{F}(X,Y)=g(\nabla _{X}\nabla _{Y}X,Y)+g(\nabla _{Y}X,\nabla
_{Y}X)-g(\nabla _{\lbrack X,Y]}X,Y)
\end{equation*}

Note that $X$ is a $2-$Killing vector field if $\mathfrak{F}(X,Y)=0$ for any
vector field $Y$ on $M$. We can prove many of the above results using the
following theorem.

\begin{theorem}
Let $\zeta \in \mathfrak{X}\left( M_{1}\times _{f}M_{2}\right) $ be a vector
field on the warped product manifold of the form $M_{1}\times _{f}M_{2}$.
Then
\begin{eqnarray*}
\mathfrak{F}(\zeta _{1}+\zeta _{2},X_{1}+X_{2}) &=&\mathfrak{F}_{1}(\zeta
_{1},X_{1})+f^{2}\mathfrak{F}_{2}(\zeta _{2},X_{2}) \\
&&+\left( f\zeta _{1}(f)+\zeta _{1}(f)\zeta _{1}(f)\right) g_{2}(X_{2},X_{2})
\\
&&+2f\zeta _{1}(f)g_{2}(\nabla _{X_{2}}\zeta _{2},X_{2}))
\end{eqnarray*}
\end{theorem}

\section{$2-$Killing Vector fields of Warped Product Space-Times}

We will apply our main results to some well-known warped product space-time
models to characterize their $2-$Killing vector fields.

\subsection{$2-$Killing Vector fields of Standard Static Space-Times}

We begin by defining standard static space-times:

Let $(M,g)$ be an $n-$dimensional Riemannian manifold and $f:M\rightarrow
(0,\infty )$ be a smooth function. Then $(n+1)-$dimensional product manifold
$I \times M$ furnished with the metric tensor
\begin{equation*}
\bar{g}=-f^{2}{\rm d}t^{2}\oplus g
\end{equation*}%
is called a standard static space-time and is denoted by $\bar{M}%
=I_{f}\times M$ where $I$ is an open, connected subinterval of $\mathbb R$
and ${\rm d}t^{2}$ is the Euclidean metric tensor on $I$.

Note that standard static space-times can be considered as a generalization
of the Einstein static universe%
\cite{AD1,AD,GES,Besse2008,Unal:2012,Unal:2009,Unal:2011,Unal:2004}.

\begin{theorem}
\label{TH4}Let $\bar{M}=I_{f}\times M$ be a standard static space-time with
the metric $\bar{g}=-f^{2}dt^{2}\oplus g$. Suppose that $u:I\rightarrow
\mathbb{R}$ is smooth on $I$. Then $\bar{\zeta}=u\partial _{t}+\zeta $ with
$\zeta \in \mathfrak{X}\left( M\right) $ is a $2-$Killing vector field on $%
\bar{M}$ if one of the following conditions is satisfied:

\begin{enumerate}
\item $\zeta $ is $2-$Killing on $M$, $u=a$ and $f\zeta \left( f\right) =b$
where $a,b\in \mathbb{R}.$

\item $\zeta $ is $2-$Killing on $M$, $u=\left( rt+s\right) ^{\frac{1}{3}}$ and $%
\zeta \left( f\right) =0$ where $r,s\in\mathbb{R}.$
\end{enumerate}
\end{theorem}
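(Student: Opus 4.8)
The plan is to recognize $\bar{M}=I_{f}\times M$ as a warped product of exactly the type handled by the master identity of Theorem~\ref{TH 1}, and then substitute. Put $M_{1}=M$ with $g_{1}=g$, and $M_{2}=I$ with $g_{2}=-dt^{2}$, and let $f$ be the warping function (which indeed lives on $M_{1}=M$). Then the warped metric $g_{1}\oplus f^{2}g_{2}$ equals $g-f^{2}dt^{2}=\bar{g}$, so $\bar{M}=M\times_{f}I$. The vector field $\bar{\zeta}=u\partial_{t}+\zeta$ splits as $\bar{\zeta}=(\zeta_{1},\zeta_{2})$ with $\zeta_{1}=\zeta\in\mathfrak{X}(M)$ and $\zeta_{2}=u\partial_{t}\in\mathfrak{X}(I)$; the hypothesis that $u$ depends only on $t$ is precisely what makes $u\partial_{t}$ the lift of a vector field on the fibre. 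I would also note at the outset that the identity of Theorem~\ref{TH 1} is assembled only from Lie derivatives and the warping function, hence is insensitive to the indefinite signature of $\bar{g}$ and applies verbatim here.

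Next I would compute the two fibre-side Lie derivatives on the one-dimensional manifold $I$. With $g_{2}=-dt^{2}$ one finds $\mathcal{L}_{u\partial_{t}}^{2}g_{2}=2u'g_{2}$ and then $\mathcal{L}_{u\partial_{t}}^{2}\mathcal{L}_{u\partial_{t}}^{2}g_{2}=\left(2uu''+4(u')^{2}\right)g_{2}$ (the one-variable instance of the computation behind the Example of Section~3, where being $2-$Killing on $\mathbb{R}$ amounts to $uu''+2(u')^{2}=0$). Assuming $\zeta$ is Killing on $M$, the first summand in Theorem~\ref{TH 1} vanishes since $\mathcal{L}_{\zeta}g=0$, and substituting the fibre terms collapses the identity to
\[
\left(\mathcal{L}_{\bar{\zeta}}\mathcal{L}_{\bar{\zeta}}\bar{g}\right)(X,Y)=\Big[\,2f^{2}\big(uu''+2(u')^{2}\big)+8fu'\zeta(f)+2f\,\zeta(\zeta(f))+2\big(\zeta(f)\big)^{2}\,\Big]\,g_{2}(X_{2},Y_{2}).
\]
Hence $\bar{\zeta}$ is $2-$Killing as soon as the bracketed scalar vanishes identically on $I\times M$.

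It then remains to check that each of the two listed hypotheses forces the bracket to vanish. Under condition (1), $u=a$ gives $u'=u''=0$, so only $2f\zeta(\zeta(f))+2(\zeta(f))^{2}=2\,\zeta\!\big(f\zeta(f)\big)$ survives, and $f\zeta(f)=b$ constant makes this $0$. Under condition (2), $\zeta(f)=0$ forces $\zeta(\zeta(f))=0$, leaving $2f^{2}\big(uu''+2(u')^{2}\big)$; writing $w=u^{3}$ turns $uu''+2(u')^{2}=0$ into $w''=0$, whose solutions are exactly $u^{3}=rt+s$, so $u=(rt+s)^{1/3}$ kills the bracket. In both cases $\mathcal{L}_{\bar{\zeta}}\mathcal{L}_{\bar{\zeta}}\bar{g}=0$, so $\bar{\zeta}$ is $2-$Killing. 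I expect the only delicate point to be the initial set-up, namely correctly identifying $M$ as the base and $I$ as the fibre and confirming that the Lie-theoretic identity of Theorem~\ref{TH 1} survives the Lorentzian signature; after that the proof is a short substitution plus the elementary observation $w''=0$.
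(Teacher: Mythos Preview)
Your proposal is correct and follows essentially the same route as the paper: both identify $M$ as the base and $I$ as the fibre, invoke the master identity of Theorem~\ref{TH 1}, compute $\mathcal{L}_{u\partial_t}g_I=2\dot u\,g_I$ and $\mathcal{L}_{u\partial_t}\mathcal{L}_{u\partial_t}g_I=(2u\ddot u+4\dot u^2)g_I$, and then check that each hypothesis annihilates the remaining scalar factor. Your treatment is slightly more explicit in two places---you note that Killing implies $2$-Killing (so the base term drops), and you justify the form $u=(rt+s)^{1/3}$ via the substitution $w=u^3$, which turns $uu''+2(u')^2=0$ into $w''=0$---whereas the paper simply asserts these facts; otherwise the arguments coincide.
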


\begin{proof}
Let $\bar{X}=x\partial _{t}+X\in \mathfrak{X}\left( \bar{M}\right) $ and $%
\bar{Y}=y\partial _{t}+Y\in \mathfrak{X}\left( \bar{M}\right) $ be any
vector fields on $\bar{M}$ where $X,Y\in \mathfrak{X}\left( M\right) $ and $%
x,y$ are smooth real-valued functions on $I$. Using Theorem \ref{TH 1}, we
have%
\begin{eqnarray*}
&&\left( \mathcal{\bar{L}}_{\bar{\zeta}}\mathcal{\bar{L}}_{\bar{\zeta}}\bar{g%
}\right) \left( \bar{X},\bar{Y}\right) \\
&=&\left( \mathcal{L}_{\zeta }\mathcal{L}_{\zeta }g\right) \left( X,Y\right)
+f^{2}\left( \mathcal{L}_{u\partial _{t}}^{I}\mathcal{L}_{u\partial
_{t}}^{I}g_{I}\right) \left( x\partial _{t},y\partial _{t}\right) +4f\zeta
\left( f\right) \left( \mathcal{L}_{\zeta _{2}}^{2}g_{2}\right) \left(
x\partial _{t},y\partial _{t}\right) \\
&&+2f\zeta \left( \zeta \left( f\right) \right) g_{I}\left( x\partial
_{t},y\partial _{t}\right) +2\zeta \left( f\right) \zeta \left( f\right)
g_{I}\left( x\partial _{t},y\partial _{t}\right)
\end{eqnarray*}

Note that for a vector $u\partial _{t}$ field on $I$, we have%
\begin{eqnarray*}
\mathcal{L}_{\zeta }g_{I}\left( x\partial _{t},y\partial _{t}\right) &=&2%
\dot{u}g_{I}\left( x\partial _{t},y\partial _{t}\right) \\
\mathcal{L}_{\zeta }\mathcal{L}_{\zeta }g_{I}\left( x\partial _{t},y\partial
_{t}\right) &=&\mathfrak{(}2u\ddot{u}+4\dot{u}^{2}\mathfrak{)}g_{I}\left(
x\partial _{t},y\partial _{t}\right)
\end{eqnarray*}%
Then%
\begin{eqnarray}
&&\left( \mathcal{\bar{L}}_{\bar{\zeta}}\mathcal{\bar{L}}_{\bar{\zeta}}\bar{g%
}\right) \left( \bar{X},\bar{Y}\right)  \notag \\
&=&\left( \mathcal{L}_{\zeta }\mathcal{L}_{\zeta }g\right) \left( X,Y\right)
+f^{2}\mathfrak{(}2u\ddot{u}+4\dot{u}^{2}\mathfrak{)}g_{I}\left( x\partial
_{t},y\partial _{t}\right) +8\dot{u}f\zeta \left( f\right) g_{I}\left(
x\partial _{t},y\partial _{t}\right)  \notag \\
&&+2\zeta \left( f\zeta \left( f\right) \right) g_{I}\left( x\partial
_{t},y\partial _{t}\right)  \label{e5}
\end{eqnarray}

The vector field $\zeta $ is $2-$Killing on $M$ and the function $u$ in both
Condition (1) and Condition (2) is a solution of%
\begin{equation*}
\mathfrak{(}2u\ddot{u}+4\dot{u}^{2}\mathfrak{)=}0
\end{equation*}%
Thus equation (\ref{e5}) becomes%
\begin{equation}
\left( \mathcal{\bar{L}}_{\bar{\zeta}}\mathcal{\bar{L}}_{\bar{\zeta}}\bar{g}%
\right) \left( \bar{X},\bar{Y}\right) =2\left[ 4f\zeta \left( f\right) \dot{u%
}+\zeta \left( f\zeta \left( f\right) \right) \right] g_{I}\left( x\partial
_{t},y\partial _{t}\right)  \label{e6}
\end{equation}

Finally, Condition (1) implies that $\dot{u}=\zeta \left( f\zeta \left(
f\right) \right) =0$ and Condition (2) implies that $\zeta \left( f\right) =0$%
. \ Consequently, Condition (1) or Condition (2) implies that%
\begin{equation*}
\left( \mathcal{\bar{L}}_{\bar{\zeta}}\mathcal{\bar{L}}_{\bar{\zeta}}\bar{g}%
\right) \left( \bar{X},\bar{Y}\right) =0
\end{equation*}%
and so $\bar{\zeta}$ is $2-$Killing on $\bar{M}$.
\end{proof}

The converse of the above theorem is considered in the following corollary.
The proof is straightforward.

\begin{corollary}
Assume that $\bar{M}$ is a standard static space-time of the form $I_{f}\times M$
and $\bar{\zeta}=u\partial _{t}+\zeta $ is a $2-$%
Killing vector field on $\bar{M}$. Then $\zeta $ is a $2-$Killing vector
field on $M$. Moreover, the vector field $u\partial _{t}$ is a $2-$Killing
vector field on $I$ if $\zeta \left( f\right) =0$.
\end{corollary}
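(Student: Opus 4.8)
The plan is to read the identity of Theorem \ref{TH 1} backwards. We regard the standard static space-time $\bar{M}=I_{f}\times M$ as the warped product with base $\left( M,g\right) $, fiber $\left( I,g_{I}\right) $ and warping function $f$, and we write an arbitrary vector field on $\bar{M}$ as $\bar{X}=x\partial _{t}+X$ with $X\in \mathfrak{X}\left( M\right) $ and $x$ a smooth function on $I$. With this identification, Theorem \ref{TH 1}, specialized exactly as in the proof of Theorem \ref{TH4} (equation (\ref{e5})), gives
\begin{equation*}
\left( \bar{\mathcal{L}}_{\bar{\zeta}}\bar{\mathcal{L}}_{\bar{\zeta}}\bar{g}\right) \left( \bar{X},\bar{Y}\right) =\left( \mathcal{L}_{\zeta }\mathcal{L}_{\zeta }g\right) \left( X,Y\right)
+\Big[ f^{2}\big( 2u\ddot{u}+4\dot{u}^{2}\big) +8\dot{u}f\zeta \left( f\right) +2\zeta \big( f\zeta \left( f\right) \big) \Big] g_{I}\left( x\partial _{t},y\partial _{t}\right) .
\end{equation*}
Since $\bar{\zeta}$ is assumed $2-$Killing on $\bar{M}$, the left-hand side vanishes for every $\bar{X},\bar{Y}$, and the argument is just to feed two convenient choices of $\bar{X},\bar{Y}$ into this relation.

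First I would take $\bar{X}=X$ and $\bar{Y}=Y$ with $X,Y\in \mathfrak{X}\left( M\right) $, i.e.\ $x=y=0$. Then $g_{I}\left( x\partial _{t},y\partial _{t}\right) =0$ and the bracketed term disappears, leaving $\left( \mathcal{L}_{\zeta }\mathcal{L}_{\zeta }g\right) \left( X,Y\right) =0$ for all $X,Y\in \mathfrak{X}\left( M\right) $. In the notation of Theorem \ref{TH 1} this first term is precisely the Lie-derivative expression $\mathcal{L}_{\zeta }\mathcal{L}_{\zeta }g$ computed intrinsically on $M$, so this says exactly that $\zeta $ is a $2-$Killing vector field on $M$.

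For the ``moreover'' part I would instead take $\bar{X}=\bar{Y}=\partial _{t}$, i.e.\ $X=Y=0$ and $x=y=1$. Now the first term vanishes trivially and $g_{I}\left( \partial _{t},\partial _{t}\right) \neq 0$ (the metric on the $I$-factor is non-degenerate), so the identity forces
\begin{equation*}
f^{2}\big( 2u\ddot{u}+4\dot{u}^{2}\big) +8\dot{u}f\zeta \left( f\right) +2\zeta \big( f\zeta \left( f\right) \big) =0 .
\end{equation*}
Imposing the extra hypothesis $\zeta \left( f\right) =0$: since $\zeta \left( f\right) $ vanishes \emph{identically} as a function on $M$, so does $\zeta \left( \zeta \left( f\right) \right) $, whence the Leibniz rule gives $\zeta \left( f\zeta \left( f\right) \right) =\zeta \left( f\right) ^{2}+f\zeta \left( \zeta \left( f\right) \right) =0$, and the displayed relation collapses to $f^{2}\big( 2u\ddot{u}+4\dot{u}^{2}\big) =0$. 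As $f>0$ this is $2u\ddot{u}+4\dot{u}^{2}=0$, and by the one-variable instance of the computation $\mathcal{L}_{u\partial _{t}}\mathcal{L}_{u\partial _{t}}\left( dt^{2}\right) =\left( 2u\ddot{u}+4\dot{u}^{2}\right) dt^{2}$ already used in the proof of Theorem \ref{TH4} (compare the Example in Section 3), this is exactly the condition for $u\partial _{t}$ to be a $2-$Killing vector field on $I$.

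This is a routine specialization of Theorem \ref{TH 1}, so I do not expect a genuine obstacle; the points needing a little care are purely bookkeeping: identifying $\bar{M}$ as a warped product with \emph{base} $M$ and \emph{fiber} $I$ so that Theorem \ref{TH 1} applies verbatim, using the non-degeneracy $g_{I}\left( \partial _{t},\partial _{t}\right) \neq 0$ to pass from the tensor identity to the scalar equation, and noting that it is the identical vanishing of $\zeta \left( f\right) $ (not merely vanishing at a point) that kills the term $\zeta \left( f\zeta \left( f\right) \right) $.
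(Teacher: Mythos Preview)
Your argument is correct and is precisely the ``direct'' proof the paper has in mind: you simply evaluate equation~(\ref{e5}) first on pairs $\bar{X}=X$, $\bar{Y}=Y$ tangent to $M$ (killing the $g_I$-term and forcing $\mathcal{L}_{\zeta}\mathcal{L}_{\zeta}g=0$), and then on $\bar{X}=\bar{Y}=\partial_t$ together with $\zeta(f)=0$ and $f>0$ to obtain $2u\ddot{u}+4\dot{u}^{2}=0$. There is nothing to add; the bookkeeping points you flag (base/fiber identification, non-degeneracy of $g_I$, identical vanishing of $\zeta(f)$) are exactly the small checks needed, and you handle them correctly.
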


\begin{example}
Let $\zeta =u\left( t\right) \partial _{t}+v\left( x\right) \partial _{x}$\
be a vector field on the warped product manifold $\bar{M}=I_{f}\times
\mathbb{R}$\ with warping function $f$\ and the metric tensor $%
{\rm d}s^{2}=-f^{2}{\rm d}t^{2}+{\rm d}x^{2}$. To prove that $\zeta $ is a $2-$Killing vector
field, we can use Equation (\ref{e5}). If $\bar{X}=x\partial _{t}+X$ and $%
\bar{Y}=y\partial _{t}+Y$ are two vector fields on $\bar{M},$then
\begin{eqnarray}
\left( \mathcal{\bar{L}}_{\bar{\zeta}}\mathcal{\bar{L}}_{\bar{\zeta}}\bar{g}%
\right) \left( \bar{X},\bar{Y}\right) &=&\left( \mathcal{L}_{\zeta }\mathcal{%
L}_{\zeta }g\right) \left( X,Y\right) +f^{2}\mathfrak{(}2u\ddot{u}+4\dot{u}%
^{2}\mathfrak{)}g_{I}\left( x\partial _{t},y\partial _{t}\right)  \label{e7}
\\
&&+8\dot{u}f\zeta \left( f\right) g_{I}\left( x\partial _{t},y\partial
_{t}\right) +2\zeta \left( f\zeta \left( f\right) \right) g_{I}\left(
x\partial _{t},y\partial _{t}\right) ,  \notag
\end{eqnarray}%
where $\zeta =v\left( x\right) \partial _{x}$ and $g={\rm d}x^{2}$. It is
now easy to show that%
\begin{eqnarray*}
\zeta \left( f\right) &=&vf^{\prime }\text{,\ \ \ }\zeta \left( f\zeta
\left( f\right) \right) =v^{2}ff^{\prime \prime }+v^{2}f^{\prime
2}+vv^{\prime }ff^{\prime } \\
\left( \mathcal{L}_{\zeta }\mathcal{L}_{\zeta }g\right) \left( \partial
_{x},\partial _{x}\right) &=&2vv^{\prime \prime }+4v^{\prime 2}
\end{eqnarray*}%
Also, an orthogonal basis of $\mathfrak{X}\left( M\right) $ is $\left\{
\partial _{t},\partial _{x}\right\} .$ Thus Equation (\ref{e7}) becomes%
\begin{eqnarray*}
\left( \mathcal{\bar{L}}_{\bar{\zeta}}\mathcal{\bar{L}}_{\bar{\zeta}}\bar{g}%
\right) \left( \partial _{x},\partial _{x}\right) &=&2vv^{\prime \prime
}+4v^{\prime 2} \\
\left( \mathcal{\bar{L}}_{\bar{\zeta}}\mathcal{\bar{L}}_{\bar{\zeta}}\bar{g}%
\right) \left( \partial _{x},\partial _{t}\right) &=&0 \\
\left( \mathcal{\bar{L}}_{\bar{\zeta}}\mathcal{\bar{L}}_{\bar{\zeta}}\bar{g}%
\right) \left( \partial _{t},\partial _{x}\right) &=&0 \\
\left( \mathcal{\bar{L}}_{\bar{\zeta}}\mathcal{\bar{L}}_{\bar{\zeta}}\bar{g}%
\right) \left( \partial _{t},\partial _{t}\right) &=&-f^{2}\mathfrak{(}2u%
\ddot{u}+4\dot{u}^{2}\mathfrak{)}-8\dot{u}vff^{\prime }-2v^{2}ff^{\prime
\prime }-2v^{2}f^{\prime 2}-2vv^{\prime }ff^{\prime }
\end{eqnarray*}

Now if $u\partial _{t}$ and $v\partial _{t}$ are $2-$Killing vector fields
on $I$ and $\mathbb{R},$ respectively, then%
\begin{equation*}
2u\ddot{u}+4\dot{u}^{2}=2vv^{\prime \prime }+4v^{\prime 2}=0
\end{equation*}%
Consequently, $\zeta $ is $2-$Killing if $f^{\prime }=0$. One can obtain the
same result by using the definition of $2-$Killing vector fields (see Appendix B).
\end{example}

\subsection{$2-$Killing Vector fields of Generalized Robertson-Walker Space-Times}

We first define generalized Robertson-Walker space-times:

Let $(M,g)$ be an $n-$dimensional Riemannian manifold and $f:I \rightarrow
(0,\infty )$ be a smooth function. Then $(n+1)-$dimensional product manifold
$I \times M$ furnished with the metric tensor
\begin{equation*}
\bar{g}=-{\rm d}t^{2}\oplus f^{2}g
\end{equation*}%
is called a generalized Robertson-Walker space-time and is denoted by $\bar{M}%
=I \times _{f} M$ where $I$ is an open, connected subinterval of $\mathbb R$
and ${\rm d}t^{2}$ is the Euclidean metric tensor on $I$.

This structure was introduced to the literature to extend Robertson-Walker
space-times \cite{Sanchez00, Sanchez98, Sanchez99}.

Due to {\it Corollary \ref{Cor1}}, we need to determine
$2-$Killing vector fields on $I.$ Suppose that $\zeta _{1}=h\partial _{t}$
is a vector field on $I$ where $h$ is a smooth function on $I$. Then%
\begin{eqnarray*}
(\mathfrak{L}_{h\partial _{t}}^{I}\mathfrak{L}_{h\partial
_{t}}^{I}g_{I})(\partial _{t},\partial _{t}) &=&-2hh^{\prime \prime
}-4(h^{\prime })^{2} \\
&=&-2\left( hh^{\prime \prime }+2(h^{\prime })^{2}\right)
\end{eqnarray*}%
Therefore, $\zeta _{1}=h\partial _{t}$ is a $2-$Killing vector field on $I$
if and only if $hh^{\prime \prime }=-2(h^{\prime })^{2}$.

In this case, one
can solve the last differential equation and obtain that: $h(t)=\left(
at-b\right) ^{\frac{1}{3}}$ for some $a,b\in \mathbb{R}$ where $t\in I$ and $%
t\neq \frac{b}{a}$.

Thus to characterize $2-$Killing vector fields on the generalized
Robertson-Walker space-time of the form $\bar{M}=I\times _{f}M$,
one can focus on vector fields of the form $%
\left( at-b\right) ^{\frac{1}{3}}\partial _{t}+V$.

An easy application of \textit{Corollary \ref{c2}} leads us to the
following result.

\begin{proposition}
Let $\bar{M}=I\times _{f}M$ be a generalized Robertson-Walker space-time
with the metric tensor $\bar{g}=-dt^{2}\oplus f^{2}g$. Suppose that $V$ is a
$2-$Killing vector field on $(M,g).$ Then a vector field $\left( at-b\right)
^{\frac{1}{3}}\partial _{t}+V$ is a $2-$Killing vector field on $(\bar{M},%
\bar{g})$ if $V$ is a homothetic vector field on $(M,g)$ with $c$ satisfying%
\begin{equation*}
\frac{a}{3}f\dot{f}+\left( f\ddot{f}+\dot{f}^{2}\right) \left( at-b\right)
=-2cf\dot{f}\left( at-b\right) ^{\frac{2}{3}}
\end{equation*}
\end{proposition}

\begin{remark}
At this point, we want to emphasize that we prefer not to apply
{\it Corollary \ref{Cor3}} since Condition (1) implies that
the warping function $f$ of a generalized Robertson-Walker space-time
of the form $\bar{M}=I\times _{f}M$ is constant and hence the underlying
warped product turns out to be just a trivial product.
\end{remark}

\appendix

\section{Proof of Theorem \protect\ref{TH 1}}

Using Proposition \ref{P1} and Proposition \ref{P2}, we get%
\begin{eqnarray*}
\left( \mathcal{L}_{\zeta }\mathcal{L}_{\zeta }g\right) \left( X,Y\right)
&=&g\left( D_{\zeta }D_{X}\zeta ,Y\right) +g\left( X,D_{\zeta }D_{Y}\zeta
\right) -g\left( D_{\left[ \zeta ,X\right] }\zeta ,Y\right) -g\left( X,D_{%
\left[ \zeta ,Y\right] }\zeta \right)  \\
&&+2g\left( D_{X}\zeta ,D_{Y}\zeta \right)
\end{eqnarray*}%
The first term $T_{1}$ is given by%
\begin{eqnarray*}
T_{1} &=&g\left( D_{\zeta }D_{X}\zeta ,Y\right)  \\
&=&g\left( D_{\zeta }\left( D_{X_{1}}^{1}\zeta _{1}+\frac{1}{f}\zeta
_{1}\left( f\right) X_{2}+\frac{1}{f}X_{1}\left( f\right) \zeta
_{2}+D_{X_{2}}^{2}\zeta _{2}-fg_{2}\left( X_{2},\zeta _{2}\right) \mathcal{%
\bigtriangledown }f\right) ,Y\right)  \\
&=&g\left( D_{\zeta _{1}}^{1}D_{X_{1}}^{1}\zeta _{1}+\frac{1}{f}\zeta
_{1}\left( \zeta _{1}\left( f\right) \right) X_{2}+\frac{1}{f}\zeta
_{1}\left( X_{1}\left( f\right) \right) \zeta _{2}+\frac{1}{f}\zeta
_{1}\left( f\right) D_{X_{2}}^{2}\zeta _{2}\right.  \\
&&\left. -\zeta _{1}\left( f\right) g_{2}\left( X_{2},\zeta _{2}\right)
\mathcal{\bigtriangledown }f-fg_{2}\left( X_{2},\zeta _{2}\right) D_{\zeta
_{1}}^{1}\mathcal{\bigtriangledown }f+\frac{1}{f}\left( D_{X_{1}}^{1}\zeta
_{1}\right) \left( f\right) \zeta _{2}\right.  \\
&&\left. +\frac{1}{f}\zeta _{1}\left( f\right) D_{\zeta _{2}}^{2}X_{2}-\zeta
_{1}\left( f\right) g_{2}\left( X_{2},\zeta _{2}\right) \mathcal{%
\bigtriangledown }f+\frac{1}{f}X_{1}\left( f\right) D_{\zeta _{2}}^{2}\zeta
_{2}\right.  \\
&&\left. -X_{1}\left( f\right) g_{2}\left( \zeta _{2},\zeta _{2}\right)
\mathcal{\bigtriangledown }f+D_{\zeta _{2}}^{2}D_{X_{2}}^{2}\zeta
_{2}-fg_{2}\left( D_{X_{2}}^{2}\zeta _{2},\zeta _{2}\right) \mathcal{%
\bigtriangledown }f\right.  \\
&&\left. -fg_{2}\left( D_{\zeta _{2}}^{2}X_{2},\zeta _{2}\right) \mathcal{%
\bigtriangledown }f-fg_{2}\left( X_{2},D_{\zeta _{2}}^{2}\zeta _{2}\right)
\mathcal{\bigtriangledown }f-g_{2}\left( X_{2},\zeta _{2}\right) \left(
\mathcal{\bigtriangledown }f\right) \left( f\right) \zeta _{2},Y\right)
\end{eqnarray*}%
and so%
\begin{eqnarray*}
T_{1} &=&g_{1}\left( D_{\zeta _{1}}^{1}D_{X_{1}}^{1}\zeta _{1},Y_{1}\right)
+f\zeta _{1}\left( \zeta _{1}\left( f\right) \right) g_{2}\left(
X_{2}.Y_{2}\right) +f\zeta _{1}\left( X_{1}\left( f\right) \right)
g_{2}\left( \zeta _{2},Y_{2}\right)  \\
&&+f\zeta _{1}\left( f\right) g_{2}\left( D_{X_{2}}^{2}\zeta
_{2},Y_{2}\right) -\zeta _{1}\left( f\right) Y_{1}\left( f\right)
g_{2}\left( X_{2},\zeta _{2}\right) -fg_{2}\left( X_{2},\zeta _{2}\right)
g_{1}\left( D_{\zeta _{1}}^{1}\mathcal{\bigtriangledown }f,Y_{1}\right)  \\
&&+f\left( D_{X_{1}}^{1}\zeta _{1}\right) \left( f\right) g_{2}\left( \zeta
_{2},Y_{2}\right) +f\zeta _{1}\left( f\right) g_{2}\left( D_{\zeta
_{2}}^{2}X_{2},Y_{2}\right) -\zeta _{1}\left( f\right) Y_{1}\left( f\right)
g_{2}\left( X_{2},\zeta _{2}\right)  \\
&&+fX_{1}\left( f\right) g_{2}\left( D_{\zeta _{2}}^{2}\zeta
_{2},Y_{2}\right) -X_{1}\left( f\right) Y_{1}\left( f\right) g_{2}\left(
\zeta _{2},\zeta _{2}\right) +f^{2}g_{2}\left( D_{\zeta
_{2}}^{2}D_{X_{2}}^{2}\zeta _{2},Y_{2}\right)  \\
&&-fY_{1}\left( f\right) g_{2}\left( D_{X_{2}}^{2}\zeta _{2},\zeta
_{2}\right) -fY_{1}\left( f\right) g_{2}\left( D_{\zeta _{2}}^{2}X_{2},\zeta
_{2}\right) -fY_{1}\left( f\right) g_{2}\left( X_{2},D_{\zeta _{2}}^{2}\zeta
_{2}\right)  \\
&&-f^{2}g_{2}\left( X_{2},\zeta _{2}\right) \left( \mathcal{\bigtriangledown
}f\right) \left( f\right) g_{2}\left( \zeta _{2},Y_{2}\right)  \\
&=&g_{1}\left( D_{\zeta _{1}}^{1}D_{X_{1}}^{1}\zeta _{1},Y_{1}\right)
+f^{2}g_{2}\left( D_{\zeta _{2}}^{2}D_{X_{2}}^{2}\zeta _{2},Y_{2}\right)  \\
&&+f\zeta _{1}\left( \zeta _{1}\left( f\right) \right) g_{2}\left(
X_{2},Y_{2}\right) +f\zeta _{1}\left( X_{1}\left( f\right) \right)
g_{2}\left( \zeta _{2},Y_{2}\right) +f\zeta _{1}\left( f\right) g_{2}\left(
D_{X_{2}}^{2}\zeta _{2},Y_{2}\right)  \\
&&-f\zeta _{1}\left( Y_{1}\left( f\right) \right) g_{2}\left( X_{2},\zeta
_{2}\right) +fg_{2}\left( X_{2},\zeta _{2}\right) \left( D_{\zeta
_{1}}^{1}Y_{1}\right) \left( f\right)  \\
&&+fg_{2}\left( \zeta _{2},Y_{2}\right) \left( D_{X_{1}}^{1}\zeta
_{1}\right) \left( f\right) +f\zeta _{1}\left( f\right) g_{2}\left( D_{\zeta
_{2}}^{2}X_{2},Y_{2}\right) -2\zeta _{1}\left( f\right) Y_{1}\left( f\right)
g_{2}\left( X_{2},\zeta _{2}\right)  \\
&&+fX_{1}\left( f\right) g_{2}\left( D_{\zeta _{2}}^{2}\zeta
_{2},Y_{2}\right) -X_{1}\left( f\right) Y_{1}\left( f\right) g_{2}\left(
\zeta _{2},\zeta _{2}\right)  \\
&&-fY_{1}\left( f\right) g_{2}\left( D_{X_{2}}^{2}\zeta _{2},\zeta
_{2}\right) -fY_{1}\left( f\right) g_{2}\left( D_{\zeta _{2}}^{2}X_{2},\zeta
_{2}\right) -fY_{1}\left( f\right) g_{2}\left( X_{2},D_{\zeta _{2}}^{2}\zeta
_{2}\right)  \\
&&-f^{2}g_{2}\left( X_{2},\zeta _{2}\right) g_{2}\left( \zeta
_{2},Y_{2}\right) \left( \mathcal{\bigtriangledown }f\right) \left( f\right)
\end{eqnarray*}%
Exchanging $X$ and $Y$ we get the second term $T_{2}$ and so%
\begin{eqnarray*}
&&T_{1}+T_{2} \\
&=&g\left( D_{\zeta }D_{X}\zeta ,Y\right) +g\left( D_{\zeta }D_{Y}\zeta
,X\right)  \\
&=&g_{1}\left( D_{\zeta _{1}}^{1}D_{X_{1}}^{1}\zeta _{1},Y_{1}\right)
+f^{2}g_{2}\left( D_{\zeta _{2}}^{2}D_{X_{2}}^{2}\zeta _{2},Y_{2}\right)
+g_{1}\left( D_{\zeta _{1}}^{1}D_{Y_{1}}^{1}\zeta _{1},X_{1}\right)  \\
&&+f^{2}g_{2}\left( D_{\zeta _{2}}^{2}D_{Y_{2}}^{2}\zeta _{2},X_{2}\right)
+2f\zeta _{1}\left( \zeta _{1}\left( f\right) \right) g_{2}\left(
X_{2},Y_{2}\right) -2X_{1}\left( f\right) Y_{1}\left( f\right) g_{2}\left(
\zeta _{2},\zeta _{2}\right)  \\
&&-2f^{2}g_{2}\left( X_{2},\zeta _{2}\right) g_{2}\left( \zeta
_{2},Y_{2}\right) \left( \mathcal{\bigtriangledown }f\right) \left( f\right)
+f\zeta _{1}\left( f\right) g_{2}\left( D_{X_{2}}^{2}\zeta _{2},Y_{2}\right)
\\
&&+fg_{2}\left( X_{2},\zeta _{2}\right) \left( D_{\zeta
_{1}}^{1}Y_{1}\right) \left( f\right) +fg_{2}\left( \zeta _{2},X_{2}\right)
\left( D_{Y_{1}}^{1}\zeta _{1}\right) \left( f\right)  \\
&&+f\zeta _{1}\left( f\right) g_{2}\left( D_{Y_{2}}^{2}\zeta
_{2},X_{2}\right) +fg_{2}\left( Y_{2},\zeta _{2}\right) \left( D_{\zeta
_{1}}^{1}X_{1}\right) \left( f\right) +fg_{2}\left( \zeta _{2},Y_{2}\right)
\left( D_{X_{1}}^{1}\zeta _{1}\right) \left( f\right)  \\
&&+f\zeta _{1}\left( f\right) g_{2}\left( D_{\zeta
_{2}}^{2}X_{2},Y_{2}\right) -2\zeta _{1}\left( f\right) Y_{1}\left( f\right)
g_{2}\left( X_{2},\zeta _{2}\right) -fY_{1}\left( f\right) g_{2}\left(
D_{X_{2}}^{2}\zeta _{2},\zeta _{2}\right)  \\
&&+f\zeta _{1}\left( f\right) g_{2}\left( D_{\zeta
_{2}}^{2}Y_{2},X_{2}\right) -2\zeta _{1}\left( f\right) X_{1}\left( f\right)
g_{2}\left( Y_{2},\zeta _{2}\right) -fX_{1}\left( f\right) g_{2}\left(
D_{Y_{2}}^{2}\zeta _{2},\zeta _{2}\right)  \\
&&-fY_{1}\left( f\right) g_{2}\left( D_{\zeta _{2}}^{2}X_{2},\zeta
_{2}\right) -fX_{1}\left( f\right) g_{2}\left( D_{\zeta _{2}}^{2}Y_{2},\zeta
_{2}\right)
\end{eqnarray*}%
The third term is given by%
\begin{eqnarray*}
&&T_{3} \\
&=&g\left( D_{\left[ \zeta ,X\right] }\zeta ,Y\right)  \\
&=&g\left( D_{\left[ \zeta _{1},X_{1}\right] }\zeta _{1}+D_{\left[ \zeta
_{2},X_{2}\right] }\zeta _{1}+D_{\left[ \zeta _{1},X_{1}\right] }\zeta
_{2}+D_{\left[ \zeta _{2},X_{2}\right] }\zeta _{2},Y\right)  \\
&=&g\left( D_{\left[ \zeta _{1},X_{1}\right] }^{1}\zeta _{1}+\frac{1}{f}%
\zeta _{1}\left( f\right) \left[ \zeta _{2},X_{2}\right] +\frac{1}{f}\left[
\zeta _{1},X_{1}\right] \left( f\right) \zeta _{2}+D_{\left[ \zeta _{2},X_{2}%
\right] }^{2}\zeta _{2}\right.  \\
&&\left. -\frac{{}}{{}}fg_{2}\left( \left[ \zeta _{2},X_{2}\right] ,\zeta
_{2}\right) \mathcal{\bigtriangledown }f,Y\right)  \\
&=&g_{1}\left( D_{\left[ \zeta _{1},X_{1}\right] }^{1}\zeta
_{1},Y_{1}\right) +f\zeta _{1}\left( f\right) g_{2}\left( \left[ \zeta
_{2},X_{2}\right] ,Y_{2}\right) +f\left[ \zeta _{1},X_{1}\right] \left(
f\right) g_{2}\left( \zeta _{2},Y_{2}\right)  \\
&&+f^{2}g_{2}\left( D_{\left[ \zeta _{2},X_{2}\right] }^{2}\zeta
_{2},Y_{2}\right) -fg_{2}\left( \left[ \zeta _{2},X_{2}\right] ,\zeta
_{2}\right) Y_{1}\left( f\right)  \\
&=&g_{1}\left( D_{\left[ \zeta _{1},X_{1}\right] }^{1}\zeta
_{1},Y_{1}\right) +f^{2}g_{2}\left( D_{\left[ \zeta _{2},X_{2}\right]
}^{2}\zeta _{2},Y_{2}\right) +f\zeta _{1}\left( f\right) g_{2}\left( \left[
\zeta _{2},X_{2}\right] ,Y_{2}\right)  \\
&&+fg_{2}\left( \zeta _{2},Y_{2}\right) \left[ \zeta _{1},X_{1}\right]
\left( f\right) -fg_{2}\left( \left[ \zeta _{2},X_{2}\right] ,\zeta
_{2}\right) Y_{1}\left( f\right)
\end{eqnarray*}%
Exchanging $X$ and $Y$ we get the fourth term $T_{4}$ and so%
\begin{eqnarray*}
&&T_{3}+T_{4} \\
&=&g_{1}\left( D_{\left[ \zeta _{1},X_{1}\right] }^{1}\zeta
_{1},Y_{1}\right) +g_{1}\left( D_{\left[ \zeta _{1},Y_{1}\right] }^{1}\zeta
_{1},X_{1}\right) +f^{2}g_{2}\left( D_{\left[ \zeta _{2},X_{2}\right]
}^{2}\zeta _{2},Y_{2}\right)  \\
&&+f^{2}g_{2}\left( D_{\left[ \zeta _{2},Y_{2}\right] }^{2}\zeta
_{2},X_{2}\right) +f\zeta _{1}\left( f\right) g_{2}\left( \left[ \zeta
_{2},X_{2}\right] ,Y_{2}\right) +fg_{2}\left( \zeta _{2},Y_{2}\right) \left[
\zeta _{1},X_{1}\right] \left( f\right)  \\
&&-fY_{1}\left( f\right) g_{2}\left( \left[ \zeta _{2},X_{2}\right] ,\zeta
_{2}\right) +f\zeta _{1}\left( f\right) g_{2}\left( \left[ \zeta _{2},Y_{2}%
\right] ,X_{2}\right) +fg_{2}\left( \zeta _{2},X_{2}\right) \left[ \zeta
_{1},Y_{1}\right] \left( f\right)  \\
&&-fX_{1}\left( f\right) g_{2}\left( \left[ \zeta _{2},Y_{2}\right] ,\zeta
_{2}\right)
\end{eqnarray*}%
The last term $T_{5}$ is given by%
\begin{eqnarray*}
&&(1/2)T_{5} \\
&=&g\left( D_{X}\zeta ,D_{Y}\zeta \right)  \\
&=&g\left( D_{X_{1}}^{1}\zeta _{1}+\frac{1}{f}\zeta _{1}\left( f\right)
X_{2}+\frac{1}{f}X_{1}\left( f\right) \zeta _{2}+D_{X_{2}}^{2}\zeta
_{2}-fg_{2}\left( X_{2},\zeta _{2}\right) \mathcal{\bigtriangledown }%
f,\right.  \\
&&\left. D_{Y_{1}}^{1}\zeta _{1}+\frac{1}{f}\zeta _{1}\left( f\right) Y_{2}+%
\frac{1}{f}Y_{1}\left( f\right) \zeta _{2}+D_{Y_{2}}^{2}\zeta
_{2}-fg_{2}\left( Y_{2},\zeta _{2}\right) \mathcal{\bigtriangledown }%
f\right)  \\
&=&g_{1}\left( D_{X_{1}}^{1}\zeta _{1},D_{Y_{1}}^{1}\zeta _{1}\right)
-fg_{2}\left( Y_{2},\zeta _{2}\right) \left( D_{X_{1}}^{1}\zeta _{1}\right)
\left( f\right) +\zeta _{1}\left( f\right) \zeta _{1}\left( f\right)
g_{2}\left( X_{2},Y_{2}\right)  \\
&&+\zeta _{1}\left( f\right) Y_{1}\left( f\right) g_{2}\left( X_{2},\zeta
_{2}\right) +f\zeta _{1}\left( f\right) g_{2}\left( X_{2},D_{Y_{2}}^{2}\zeta
_{2}\right)  \\
&&+\zeta _{1}\left( f\right) X_{1}\left( f\right) g_{2}\left( \zeta
_{2},Y_{2}\right) +X_{1}\left( f\right) Y_{1}\left( f\right) g_{2}\left(
\zeta _{2},\zeta _{2}\right) +fX_{1}\left( f\right) g_{2}\left( \zeta
_{2},D_{Y_{2}}^{2}\zeta _{2}\right)  \\
&&+f\zeta _{1}\left( f\right) g_{2}\left( D_{X_{2}}^{2}\zeta
_{2},Y_{2}\right) +fY_{1}\left( f\right) g_{2}\left( D_{X_{2}}^{2}\zeta
_{2},\zeta _{2}\right) +f^{2}g_{2}\left( D_{X_{2}}^{2}\zeta
_{2},D_{Y_{2}}^{2}\zeta _{2}\right)  \\
&&-fg_{2}\left( X_{2},\zeta _{2}\right) \left( D_{Y_{1}}^{1}\zeta
_{1}\right) \left( f\right) +f^{2}g_{2}\left( X_{2},\zeta _{2}\right)
g_{2}\left( Y_{2},\zeta _{2}\right) g_{1}\left( \mathcal{\bigtriangledown }f,%
\mathcal{\bigtriangledown }f\right)
\end{eqnarray*}%
Then%
\begin{eqnarray*}
\left( \mathcal{L}_{\zeta }\mathcal{L}_{\zeta }g\right) \left( X,Y\right)
&=&\left( \mathcal{L}_{\zeta _{1}}^{1}\mathcal{L}_{\zeta
_{1}}^{1}g_{1}\right) \left( X_{1},Y_{1}\right) +f^{2}\left( \mathcal{L}%
_{\zeta _{2}}^{2}\mathcal{L}_{\zeta _{2}}^{2}g_{2}\right) \left(
X_{2},Y_{2}\right)  \\
&&+4f\zeta _{1}\left( f\right) \left( \mathcal{L}_{\zeta
_{2}}^{2}g_{2}\right) \left( X_{2},Y_{2}\right) +2f\zeta _{1}\left( \zeta
_{1}\left( f\right) \right) g_{2}\left( X_{2},Y_{2}\right)  \\
&&+2\zeta _{1}\left( f\right) \zeta _{1}\left( f\right) g_{2}\left(
X_{2},Y_{2}\right)
\end{eqnarray*}

\section{Space-time Example}

In this section we deal with a standard static space-time
of the form $I_{f}\times
\mathbb{R}
$. Using Proposition \ref{P1}, one can establish the followings

\begin{enumerate}
\item $\nabla _{\partial _{x}}\partial _{x}=0$,

\item $\nabla _{\partial _{t}}\partial _{x}=\nabla _{\partial _{x}}\partial
_{t}=\partial _{x}\left( \ln f\right) \partial _{t}=\frac{f^{\prime }}{f}%
\partial _{t}$\ and

\item $\nabla _{\partial _{t}}\partial _{t}=ff^{\prime }\partial _{x}$
\end{enumerate}

on the warped product manifold $I_{f}\times
\mathbb{R}
$. It is clear that%
\begin{equation*}
\left[ \bar{\zeta},\partial _{t}\right] =-\dot{u}\partial _{t}\text{ \ \ \ \
\ \ \ \ }\left[ \bar{\zeta},\partial _{x}\right] =-v^{\prime }\partial _{x}
\end{equation*}%
Also, we have%
\begin{eqnarray*}
\nabla _{\partial _{t}}\bar{\zeta} &=&uff^{\prime }\partial _{x}+\frac{1}{f}%
\left( \dot{u}f+vf^{\prime }\right) \partial _{t} \\
\nabla _{\partial _{x}}\bar{\zeta} &=&v^{\prime }\partial _{x}+\frac{1}{f}%
\left( uf^{\prime }\right) \partial _{t}
\end{eqnarray*}

and%
\begin{eqnarray*}
\nabla _{\bar{\zeta}}\nabla _{\partial _{t}}\bar{\zeta} &=&\left[
uvff^{\prime \prime }+2uvf^{\prime 2}+2u\dot{u}ff^{\prime }\right] \partial
_{x} \\
&&+\frac{1}{f}\left[ v^{2}f^{\prime \prime }+vv^{\prime }f^{\prime }+v\dot{u}%
f^{\prime }-u^{2}ff^{\prime 2}+u\ddot{u}f\right] \partial _{t} \\
\nabla _{\bar{\zeta}}\nabla _{\partial _{x}}\bar{\zeta} &=&\left( vv^{\prime
\prime }+u^{2}f^{\prime 2}\right) \partial _{x}+\frac{1}{f}\left( u\dot{u}%
f^{\prime }+uv^{\prime }f^{\prime }+uvf^{\prime \prime }\right) \partial _{t}
\end{eqnarray*}%
Finally,%
\begin{eqnarray*}
\nabla _{\left[ \bar{\zeta},\partial _{t}\right] }\bar{\zeta} &=&-u\dot{u}%
ff^{\prime }\partial _{x}-\frac{1}{f}\left( \dot{u}vf^{\prime }+\dot{u}%
^{2}f\right) \partial _{t} \\
\nabla _{\left[ \bar{\zeta},\partial _{x}\right] }\bar{\zeta} &=&-v^{\prime
2}\partial _{x}-\frac{1}{f}\left( uv^{\prime }f^{\prime }\right) \partial
_{t}
\end{eqnarray*}

Now we can evaluate $2$-Killing forms  on $I_{f}\times
\mathbb{R}
$ as follows%
\begin{eqnarray*}
\left( \mathcal{\bar{L}}_{\bar{\zeta}}\mathcal{\bar{L}}_{\bar{\zeta}%
}g\right) \left( \partial _{x},\partial _{x}\right) &=&2\left[ vv^{\prime
\prime }+2v^{\prime 2}\right] \\
\left( \mathcal{L}_{\bar{\zeta}}\mathcal{L}_{\bar{\zeta}}g\right) \left(
\partial _{t},\partial _{x}\right) &=&0 \\
\left( \mathcal{L}_{\bar{\zeta}}\mathcal{L}_{\bar{\zeta}}g\right) \left(
\partial _{x},\partial _{t}\right) &=&0 \\
\left( \mathcal{L}_{\bar{\zeta}}\mathcal{L}_{\bar{\zeta}}g\right) \left(
\partial _{t},\partial _{t}\right) &=&-2f^{2}\left[ u\ddot{u}+2\dot{u}^{2}%
\right] -2\left[ v^{2}ff^{\prime \prime }+vv^{\prime }ff^{\prime }\right] -8%
\dot{u}vff^{\prime }-2v^{2}f^{\prime 2}
\end{eqnarray*}

which is what we have done before.

\bibliographystyle{acm}

\bigskip 

\end{document}